\theoremstyle{plain}
\newtheorem{theorem}{Theorem}[section]
\newtheorem{lemma}[theorem]{Lemma}
\newtheorem{proposition}[theorem]{Proposition}
\newtheorem{definition}{Definition}[section]
\newtheorem{example}{Example}[section]
\def\G{\operatorname{G}}
\def\GL{\operatorname{GL}}
\def\U{\operatorname{U}}
\def\ad{\operatorname{ad}}
\def\diag{\operatorname{diag}}
\def\exp{\operatorname{exp}}
\def\ker{\operatorname{ker}}
\def\max{\operatorname{max}}
\def\pr{\operatorname{pr}}
\def\Stab{\operatorname{Stab}}
\def\tr{\operatorname{tr}}
\newcommand{\frg}{\mathfrak{g}}
\begin{document}

\title{A geometric interpretation of Kirillov's conjecture}
\date{June 2018}
\thanks{The authors thank David Vogan for suggesting this problem.}
\author{Gang Liu and Jun Yu}

\address[Gang Liu]{Universit\'{e} de Lorraine,
Institut Elie Cartan de Lorraine,  3 rue Augustin Fresnel,
57073 Metz, France.}
\email{gang.liu@univ-lorraine.fr}

\address[Jun Yu]{BICMR, Peking University, No. 5 Yiheyuan Road, Haidian District, Beijing 100871, China.}
\email{junyu@bicmr.pku.edu.cn}

\abstract{In this article, we give a natural geometric interpretation of Kirillov's conjecture for tempered
representations in the framework of orbit method. This interpretation can also be considered
as a first generalization of Duflo's conjecture to non-discrete series representations.}
\endabstract

\maketitle

\noindent {\bf Mathematics Subject Classification (2010).} 22E46 (17B08, 53D20).

\noindent {\bf Keywords.} Kirillov's conjcture,  Duflo's conjecture, orbit method, moment map, tempered representations.

\tableofcontents

\section{Introduction}

Let $G=\GL(n, \mathbb{K})$, where $\mathbb{K}=$ $\mathbb{R}$ or $\mathbb{C}$, and let $P$ be
the subgroup of  $G$ with matrices whose last row is $(0,0,\dots,0,1)$. Let $\pi$ be an
irreducible unitary representation of $G$. Then we have the following theorem known as
"Kirillov's conjeture":

\begin{theorem}\label{Kirillov conjecture}
The restriction of $\pi$ to $P$, $\pi\vert_{P}$ is irreducible.
\end {theorem}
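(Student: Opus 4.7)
The plan is to follow the classical strategy due to Bernstein (in the $p$-adic case) and Sahi, Sahi--Stein, and Baruch (in the archimedean cases): apply the Mackey machine to the abelian unipotent radical of $P$ and use a Kirillov model to make the resulting $P$-action explicit. The structural setup is $P = H \ltimes N$ with $H = \GL(n-1, \mathbb{K})$ embedded in the upper-left block and $N \cong \mathbb{K}^{n-1}$ consisting of the column vectors in the upper right. The group $N$ is abelian and normal in $P$, and $H$ acts on $N$ by its defining representation, hence on $\widehat{N}$ by the contragredient. Fixing a non-trivial additive character of $\mathbb{K}$ identifies $\widehat{N}$ with $\mathbb{K}^{n-1}$; under the $H$-action there are exactly two orbits, namely $\{0\}$ and the open complement $\Omega = \widehat{N}\setminus\{0\}$. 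The $P$-stabilizer of any $\psi \in \Omega$ is $P' \ltimes N$, where $P'$ denotes the mirabolic subgroup of $\GL(n-1,\mathbb{K})$.

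Because $\pi|_N$ is unitary and $N$ is abelian, it decomposes as a direct integral $\int^{\oplus}_{\widehat{N}}\mathcal{H}_{\chi}\,d\mu(\chi)$, and intertwining by $H$ forces $\mu$ to be $H$-quasi-invariant. Assuming one can establish the vanishing $\mu(\{0\}) = 0$, Mackey's imprimitivity theorem for regular semidirect products yields
$$\pi|_P \;\cong\; \Ind_{P'\ltimes N}^{P}(\psi \otimes \sigma)$$
for some unitary representation $\sigma$ of $P'$, and Mackey's irreducibility criterion reduces the desired irreducibility of $\pi|_P$ to that of $\sigma$. The representation $\sigma$ is the first Bernstein--Zelevinsky derivative of $\pi$ and, through the Kirillov realization, can be identified with the restriction to $P'$ of an irreducible unitary representation of $\GL(n-1,\mathbb{K})$. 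One then closes the argument by induction on $n$, the base case $n=1$ being trivial since $P$ is then the trivial group.

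The analytic heart, and the step I expect to be the main obstacle, is the vanishing $\mu(\{0\}) = 0$, equivalently the absence of non-zero $N$-invariant distributional vectors in $\pi$. In the $p$-adic setting this is routine thanks to exactness of the Jacquet functor and finite-dimensionality of Jacquet modules, but over $\mathbb{R}$ or $\mathbb{C}$ one must descend to the Casselman--Wallach smooth globalization $\pi^{\infty}$ and control $N$-coinvariants with non-trivial continuity estimates. The most efficient route is to invoke the genericity of every irreducible unitary representation of $\GL(n,\mathbb{K})$ (Jacquet--Shalika, Kostant, Vogan): such a representation admits a non-zero Whittaker functional, yielding a Kirillov model on a function space on $\mathbb{K}^{n-1}\setminus\{0\}$ in which the $P$-action is manifestly irreducible. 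Once this analytic input is in hand, everything else is formal Mackey theory combined with induction on $n$.
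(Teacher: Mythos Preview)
The paper does not itself prove Theorem~\ref{Kirillov conjecture}: it is stated in the introduction as a known result, attributed to Sahi, Sahi--Stein, and ultimately Baruch~\cite{Baruch}. The paper's own contribution is the geometric interpretation in Theorem~\ref{main theorem} for the \emph{tempered} case, and even there the representation-theoretic assertion $\pi|_{P}=I^{n-1}E1$ is quoted from Sahi (Theorem~\ref{T: Sahi}) rather than reproved.

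Your outline follows the classical Mackey-machine strategy and correctly isolates the analytic difficulty, but the proposed resolution contains a genuine gap. You invoke the genericity of every irreducible unitary representation of $\GL(n,\mathbb{K})$ in order to obtain a Kirillov model; this is false. One-dimensional characters are already non-generic for $n\geq 2$, and more substantially the Speh representations of $\GL(2m,\mathbb{R})$ ($m\geq 2$) are infinite-dimensional, unitary, irreducible, and non-generic. Thus the Whittaker-model route cannot cover the full unitary dual, which is precisely why Sahi's original argument~\cite{Sahi} treats only tempered representations and certain complementary series. Your inductive step has the same flaw in disguise: the assertion that the Mackey fibre $\sigma$ on $P'$ is the restriction to $P'$ of some irreducible unitary representation of $\GL(n-1,\mathbb{K})$ is unjustified---there is no archimedean derivative theory providing such a $\pi'$ a priori, and producing one is essentially the content of the theorem. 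Baruch's proof circumvents both issues by an entirely different method: he shows that any $P$-conjugation-invariant eigendistribution on $G$ is automatically $G$-conjugation-invariant, whence every $P$-intertwiner on $\pi$ is scalar.
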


Kirillov's conjecture has a quite long story,  among the main contributors of its proof,
are Sahi, Sahi-Stein and  Baruch (who finally proved the conjecture in all generality in \cite{Baruch}).
It is noted that not only the statement of Kirillov's conjecture is of purely analytic
nature, the final proof of Baruch is also essentially analytic.

Kirillov's conjecture is of course a branching problem. Concerning branching problems for
reductive groups, significant progress has been made during last twenty years, notably by
Kobayashi (and his collaborators). However in these works, subgroups are almost always
reductive as well.

On the other hand, by works of Kostant, Kirillov, Souriau, Duflo and others, we know that
geometric methods have played an important role in the development of representation theory
and harmonic analysis. Then we may ask,  despite the analytic nature of branching problems,
can we interpret the branching laws in some geometric manner? Or can branching laws be
essentially determined by some geometric data (e.g. coadjoint orbits) related to
representations? The answer to this problem is positive in some context, more precisely,
in some cases, we can describe the branching laws in the framework of \emph{orbit method},
or more generally of \emph{geometric quantization} . In this direction, the first success
concerned the case where the group $G$ is compact as initiated in the thesis of Heckman
in the framework of orbit method, and generalized by  the Guillemin-Sternberg conjecture
(proved by Meinrenken and also Tian-Zhang)  in the framework of geometric quantization
under the celebrated slogan  \emph{quantization commutes with reduction}. Since then,
along this direction, the theory was quite generalized developed notably by Paradan,
Vergne, also Ma and Zhang (especially the proof of Vergne's conjecture). Nevertheless,
it should be pointed out that in almost all these works, the groups and subgroups are
reductive. Encouraged by all these developments, more recently, Duflo formulated a
conjecture which aims at giving a geometric interpretation for the branching problem
$(G,H,\pi)$, where $G$ is an (almost) algebraic group, $H$ is an (almost) algebraic
subgroup of $G$, and  $\pi$ is a  \emph{discrete series representation} of $G$.
For more details concerning Duflo's conjecture, see the next section.

The main purpose of this article is to give a geometric interpretation of Kirillov's
conjecture for \emph{tempered representations} in the spirit of Duflo's conjecture.
Since Duflo's conjecture in its initial formulation concerns only discrete series
representations, and in our case (namely $G=\GL(n, \mathbb{K})$), there are no any
discrete series representations, our work can also be considered as a (and to the
best of our knowledge, a first) generalization of Duflo's conjecture.  




\section{Duflo's conjecture} \

In this section, we will state Duflo's initial conjecture. For the reader's sake, we will begin by outlining some essential ingredients in Duflo's orbit method. For more details and the general setting about the theory of Duflo's orbit method, we refer to Duflo's "CIME lectures" \cite{Duflo1}.

Let $G$ be an almost algebraic real group with Lie algebra $\mathfrak{g}$.  Denote by $\mathfrak{g}^*$ the algebraic dual of $\mathfrak{g}$.  In the framework of Duflo's theory,  a fundamental  notion is \emph{admissible} (in the sense of Duflo) and   \emph{well polarizable}  (in the sense of Pukanszky) $G$-coadjoint orbits
(in $\mathfrak{g}^*$).  Each such orbit $\mathcal{O}$ is attached to at least one irreducible unitary representation of $G$. Moreover, two different such orbits are associated to non-equivalent representations.  In general, the set of irreducible unitary $G$-representation  associated to admissible and well polarizable $G$-codajoint orbits is not the whole unitary dual $\hat{G} $. However, it is sufficient to describe the Plancherel formula of $G$ (namely, the decomposition of $L^2(G)$) by Duflo's theory. Especially all \emph{discrete series} representations of $G$ (i.e., those appearing in the discrete part of the Plancherel formula of $G$), and "almost" all  \emph{ tempered } representations of $G$  (i.e., those appearing in the spectral decomposition of $L^2(G)$) are attached to admissible \emph{stronlgy regular} $G$-adjoint orbits.  Recall that an element $f\in \mathfrak{g}^*$ is called strongly reagular, if $f$ is regular (i.e., the coadjoint orbit of $f$ is of maximal dimension) and its "reductive factor"
$\mathfrak{s}(f):= \{X\in \mathfrak{g}(f):\text{ad} X \  \text{is semisimple}  \}$ is  of maximal dimension among the reductive factors of all the
regular elements in  $\mathfrak{g}^*$.  A coadjoint orbit $\mathcal{O}$ is called strongly regular, if there exists
an element $f\in \mathcal{O}$ (then each $f\in \mathcal{O}$ ) which is strongly regular. Notice that each strongly regular coadjoint orbit is automatically well polarizable.

Now let $H$ be an almost algebraic subgroup of $G$ with Lie algebra $\mathfrak{h}$.  Let $\mathcal{O}$ be a $G$-cadjoint orbit (in $\mathfrak{g}^*$).  It is well known that equipped with the Kirillov-Kostant-Souriau symplectic form $\mathcal{ \omega}$, $\mathcal{O}$ becomes a $H$-Hamiltonian space. The corresponding moment map is just the natural projection $\text{p}: \mathcal{O} \rightarrow\mathfrak{h}^{\ast}$

Let $\pi$ be a discrete series of $G$, as stated above, it is attached to a strongly regular $G$-coadjoint orbit $\mathcal{O}_{\pi}$.  Consider the restriction of $\pi$ to $H$, $\pi\vert_{H}$. Then in the context, Duflo's conjecture states as follows:\\

\begin{itemize}

  \item[i)] $\pi\vert_{H}$ is $H$-admissible (in the sense of Kobayashi) if and only if the moment map $\text{p}: \mathcal{O}_{\pi}\rightarrow\mathfrak{h}^{\ast}$ is \textit{weakly proper}.

 \item[ii)]  If $\pi\vert_{H}$ is $H$-admissible, then each irreducible $H$-representation $\sigma$ which appears in $\pi\vert_{H}$ is attached to a \textit{strongly regular} $H$-coadjoint orbit $\Omega$ (in the sense of Duflo) which is contained in $\text{p}(\mathcal{O}_{\pi})$.

\item[iii)]  If $\pi\vert_{H}$ is $H$-admissible, the multiplicity of each such $\sigma$  can be expressed geometrically on the \textit{reduced space}  of $\Omega$ (with respect to the moment map $\text{p}$).

 \end{itemize}

Let us give some more explanations for Duflo's conjecture. Firstly, the notion "$H$-admissible" above is due to Kobayashi, which means that $\pi\vert_{H}$ decomposes discretely and with finite multiplicities. 

 The "weak properness" in i) means that the preimage (for $\text{p}$) of each compact subset which is contained in $\text{p}(\mathcal{O}_{\pi})\cap\Upsilon_{sr}$ is compact in $\mathcal{O}_{\pi}$. Here $\Upsilon_{sr}$ is the set of all strongly regular elements in  $\mathfrak{h}^*$. 

  For (ii), as we already mentioned above,  each discrete series of $G$ (resp. $H$) is attached to a strongly regular $G$ (resp. $H$)-coadjoint orbit. Moreover according to Duflo-Vargas's work (\cite{Du-Va1}, \cite{Du-Va2}), each irreducible $H$-representation $\widetilde{\sigma}$ which appears in the integral decomposition of  $\pi\vert_{H}$ (which is not necessarily $H$-admissible) is attached to a strongly regular $H$-coadjoint orbit. Note that $\widetilde{\sigma}$ is not necessarily a discrete series. However, if $\pi\vert_{H}$ is $H$-admissible, then each $H$-irreducible representation appearing in $\pi\vert_{H}$ must be a discrete series. Thus  (ii) has a nice geometric meaning.

  Despite some progress (see for example \cite{Liu}), Duflo's conjecture is still not fully established. Nevertheless, it seems reasonable  to ask if we can generalize Duflo's conjecture (possibly with "adapted" modifications) to larger family of unitary irreducible representations of $G$ ( which are not necessarily discrete series). A first such attempt could be naturally for tempered representations, since tempered representations are closely related to discrete series, and almost all of them are attached to strongly regular coadjoint orbits.  In fact, we will prove the generalization in this direction under the setting of Kirillov's conjecture, which, in return, gives a geometric interpretation of Kirillov's conjecture for tempered representations.

\section{Geometry of the moment map $p:\mathcal{O}_{f}:\rightarrow\mathfrak{p}^{\ast}$}\label{S:moment}

\subsection{Coadjoint action, the dual map and the moment map}\label{SS:Porbit}

Let $n\geq 1$, and $k=\mathbb{R}$ or $\mathbb{C}$. Write $G_{n}(k)=\GL(n,k)$, $$P_{n}(k)=
\{\left( \begin{array}{cc}A&\alpha\\0_{1\times(n-1)}&1\\\end{array}\right):A\in\GL(n-1,k),
\alpha\in k^{n-1}\}.$$ In the literature $P_{n}(k)$ is called a microbolic subgroup. Write
$\mathfrak{g}_{n}(k)=\mathfrak{gl}(n,k)$, which is the algebra of $G_{n}(k)$. Write
$$\mathfrak{p}_{n}(k)=\{\left(\begin{array}{cc}A&\alpha\\0_{1\times(n-1)}&0\\\end{array}
\right):A\in\mathfrak{gl}(n-1,k),\alpha\in k^{n-1}\},$$ which is the Lie algebra of $P_{n}(k)$.

Write $\mathfrak{g}_{n}(k)^{\ast}$ (or $\mathfrak{p}_{n}(k)^{\ast}$) for the dual space of
$\mathfrak{g}_{n}(k)$ (or $\mathfrak{p}_{n}(k)$). Then, $G=G_{n}(k)$ (or $P_{n}(k)$) acts on
$\frg^{\ast}=\mathfrak{g}_{n}(k)^{\ast}$ (or $\mathfrak{p}_{n}(k)^{\ast}$) through $$(g\cdot f)\xi
=f(g^{-1}\cdot\xi),\ \forall g\in G,\forall f\in\frg^{\ast},\forall\xi\in\frg.$$ This is called 
the {\it coadjoint action}, and a $G$ orbit in $\frg^{\ast}$ is called a coadjoint orbit.

Write $$(\xi,\eta)=\tr(\xi\eta),\ \forall\xi,\eta\in\mathfrak{g}_{n}(k).$$ This gives a $G_{n}(k)$
conjugation invariant nondegenerate bilinear form on $\mathfrak{g}_{n}(k)$. It gives a $G_{n}(k)$ 
equivariant isomorphism $$\pr:\mathfrak{g}_{n}(k)\rightarrow\mathfrak{g}_{n}(k)^{\ast},\quad\xi
\mapsto f$$ defined by $$f(\eta)=\tr(\xi\eta),\ \forall\eta\in\mathfrak{g}_{n}(k).$$ Through $\pr$, 
the above bilinear form on $\mathfrak{g}_{n}(k)$ induces a nondegenerate bilinear form on 
$\mathfrak{g}_{n}(k)^{\ast}$ defined by $$(f,g)=(\pr^{-1}(f),\pr^{-1}(g)),\forall f,g\in
\mathfrak{g}_{n}(k)^{\ast}.$$

Write $$\overline{\mathfrak{p}}_{n}(k)=\{\left(\begin{array}{cc}A&0_{(n-1)\times 1}\\\alpha^{t}&0
\\\end{array}\right):A\in\mathfrak{gl}(n-1,k),\alpha\in k^{n-1}\}.$$ Define $\pr':\mathfrak{g}_{n}(k)
\rightarrow\mathfrak{p}_{n}(k)^{\ast}$ by $$(\pr'(\xi))(\eta)=\tr(\xi\eta),\ \forall\eta\in
\mathfrak{p}_{n}(k).$$ It is easy to show that $$\ker(\pr')=\{\left(\begin{array}{cc}
0_{(n-1)\times(n-1)}&\alpha\\0_{1\times(n-1)}&t\\\end{array}\right):\alpha\in k^{n-1},t\in k.\}$$
It is clear that $\mathfrak{g}_{n}(k)=\ker(\pr')\oplus\overline{\mathfrak{p}}_{n}(k)$ as a linear
space. Thus, $\pr'|_{\overline{\mathfrak{p}}_{n}(k)}:\overline{\mathfrak{p}}_{n}(k)\rightarrow
\mathfrak{p}_{n}(k)^{\ast}$ is a linear isomorphism. In this way, any element in $f\in
\mathfrak{p}_{n}(k)^{\ast}$ could be represented by $$f=\pr'(\xi)=\pr(\xi)|_{\mathfrak{p}_{n}(k)}$$
for a unique $\xi\in\overline{\mathfrak{p}}_{n}(k)$.

The {\it moment map} $p:\mathfrak{g}_{n}(k)^{\ast}\rightarrow\mathfrak{p}_{n}(k)^{\ast}$ is defined
by $$f\mapsto f'=f|_{\mathfrak{p}_{n}(k)}.$$ For any $\xi\in\mathfrak{g}_{n}(k)$, we have
$$p(\pr(\xi))=\pr'(\xi).$$

Note also that for any Lie group $G$, the coadjoint action of $\mathfrak{g}$ on $\mathfrak{g}^{\ast}$
(which is the differential of the coadjoint action of $G$ on $\mathfrak{g}^{\ast}$) is determined by
$$(\ad(X)f)(Y)=-f([X,Y]),\ \forall X,Y\in\mathfrak{g},\forall f\in\mathfrak{g}^{\ast}.$$

\subsection{The classification of $P$ coadjoint orbits}

Write $$N_{n}(k)=\{\left(\begin{array}{cc}I_{n-1}&\alpha\\0_{1\times(n-1)}&1\\\end{array}\right):
\alpha\in k^{n-1}\},$$ and $$\mathfrak{n}_{n}(k)=\{\left(\begin{array}{cc}0_{n-1}&\alpha
\\0_{1\times(n-1)}&0\\\end{array}\right):\alpha\in k^{n-1}\}.$$ Then, $N_{n}(k)$ is the unipotent
radical of $P_{n}(k)$, and $\mathfrak{n}_{n}(k)$ is its Lie algebra (=nilpotent radical of
$\mathfrak{p}_{n}(k)$). Write $$L_{n}(k)=\{\left(\begin{array}{cc}A&0_{(n-1)\times 1}\\0_{1\times(n-1)}
&1\\\end{array}\right):A\in G_{n-1}(k)\},$$ and $$\mathfrak{l}_{n}(k)=\{\left(\begin{array}{cc}A&
0_{(n-1)\times 1}\\0_{1\times(n-1)}&0\\\end{array}\right):A\in\mathfrak{g}_{n-1}(k).\}.$$ Then,
$L_{n}(k)$ (or $\mathfrak{l}_{n}(k)$) is a Levi subgroup (or Levi subalgebra) of $P_{n}(k)$ (or of
$\mathfrak{l}_{n}(k)$).

We could identify $\mathfrak{l}_{n}(k)$ with $\mathfrak{p}_{n}(k)/\mathfrak{n}_{n}(k)$. By this, there is
an exact sequence of $P_{n}(k)$ modules, $$0\rightarrow\mathfrak{n}_{n}(k)\rightarrow\mathfrak{p}_{n}(k)
\rightarrow\mathfrak{l}_{n}(k)\rightarrow 0.$$ Dually, there is an exact sequence of $P_{n}(k)$ modules,
$$0\rightarrow\mathfrak{l}_{n}(k)^{\ast}\rightarrow\mathfrak{p}_{n}(k)^{\ast}\rightarrow
\mathfrak{n}_{n}(k)^{\ast}\rightarrow 0.$$

\begin{lemma}\label{L:Porbit-inductive}
For any $n\geq 1$, there is an identification $$\mathfrak{p}_{n}(k)^{\ast}/P_{n}(k)=
\mathfrak{l}_{n}(k)^{\ast}/L_{n}(k)\bigsqcup\mathfrak{p}_{n-1}(k)^{\ast}/P_{n-1}(k).$$
\end{lemma}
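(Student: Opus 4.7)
The plan is to analyze $P_n$-orbits in $\mathfrak{p}_n^*$ by stratifying according to the $P_n$-equivariant restriction map $q:\mathfrak{p}_n^*\to\mathfrak{n}_n^*$ coming from the dual exact sequence. Since $N_n$ is abelian and normal, $P_n$ acts on $\mathfrak{n}_n^*$ through $L_n\cong\GL(n-1,k)$, and $\mathfrak{n}_n^*\cong k^{n-1}$ decomposes into exactly two $L_n$-orbits, namely $\{0\}$ and $\mathfrak{n}_n^*\setminus\{0\}$. The two summands of the lemma will correspond to the preimages of these two orbits.

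For the fiber $q^{-1}(0)=\mathfrak{l}_n^*$ (i.e., functionals vanishing on $\mathfrak{n}_n$), I would first check by a direct matrix computation that $N_n$ acts trivially on $\mathfrak{l}_n^*\subset\mathfrak{p}_n^*$: writing $n=\bigl(\begin{smallmatrix}I&v\\0&1\end{smallmatrix}\bigr)\in N_n$ and $X=\bigl(\begin{smallmatrix}B&\beta\\0&0\end{smallmatrix}\bigr)\in\mathfrak{p}_n$, one sees $n^{-1}Xn-X\in\mathfrak{n}_n$, so $(n\cdot f)(X)=f(X)$ whenever $f|_{\mathfrak{n}_n}=0$. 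Consequently $P_n$-orbits on $\mathfrak{l}_n^*$ coincide with $L_n$-orbits, yielding the first summand.

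For the preimage of $\mathfrak{n}_n^*\setminus\{0\}$, pick the basepoint $\bar f_0\in\mathfrak{n}_n^*$ dual to $e_{n-1}\in\mathfrak{n}_n$. Orbit-stabilizer reduces the $P_n$-orbit classification to $\Stab_{P_n}(\bar f_0)$-orbits on $q^{-1}(\bar f_0)$. A short calculation gives $\Stab_{P_n}(\bar f_0)=P_{n-1}(k)\ltimes N_n(k)$, where $P_{n-1}(k)$ is embedded in $L_n=\GL(n-1,k)$ as the subgroup whose last row equals $(0,\dots,0,1)$. Using the $\overline{\mathfrak{p}}_n$-parametrization from Section~\ref{SS:Porbit}, the fiber identifies with $\mathfrak{gl}(n-1,k)$ via $A\leftrightarrow\bigl(\begin{smallmatrix}A&0\\e_{n-1}^t&0\end{smallmatrix}\bigr)$, and the $N_n$-action translates precisely the last column of $A$ by an arbitrary vector. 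I then define $\mu:q^{-1}(\bar f_0)\to\mathfrak{p}_{n-1}^*$ by discarding this last column and interpreting the resulting $(n-1)\times(n-2)$ block as an element of $\overline{\mathfrak{p}}_{n-1}\cong\mathfrak{p}_{n-1}^*$. The fibers of $\mu$ are exactly the $N_n$-orbits, so the orbit space collapses to $P_{n-1}\backslash\mathfrak{p}_{n-1}^*$ provided $\mu$ is $P_{n-1}$-equivariant.

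The $P_{n-1}$-equivariance is the main technical obstacle, and I expect it to follow from a routine but careful block-matrix expansion. Writing $A'=\bigl(\begin{smallmatrix}B&\beta\\0&1\end{smallmatrix}\bigr)\in P_{n-1}$ and $A=\bigl(\begin{smallmatrix}A_0&a\\a_0^t&a_{n-1,n-1}\end{smallmatrix}\bigr)$, one expands $A'AA'^{-1}$ and reads off that the first $n-2$ columns transform by $A_0\mapsto BA_0B^{-1}+\beta a_0^tB^{-1}$ and $a_0\mapsto B^{-t}a_0$. These are exactly the formulas for the coadjoint $P_{n-1}$-action on $\mathfrak{p}_{n-1}^*$ obtained by specializing the general formula of Section~\ref{SS:Porbit} with $n$ replaced by $n-1$. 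Combining the two cases then produces the claimed disjoint union.
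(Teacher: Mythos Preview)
Your proposal is correct and follows the same overall strategy as the paper: stratify by the $P_n$-equivariant projection $\mathfrak{p}_n^*\to\mathfrak{n}_n^*$, observe that $N_n$ acts trivially on the fiber over $0$ to get the $\mathfrak{l}_n^*/L_n$ piece, and for the nonzero stratum use orbit--stabilizer to reduce to $(\mathfrak{l}_n^*+\bar h)/P_n^h$ with $P_n^h=N_n\rtimes P_{n-1}$, then mod out the $N_n$-translations.

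The only real difference is in how the quotient $(\mathfrak{l}_n^*+\bar h)/N_n$ is identified with $\mathfrak{p}_{n-1}^*$ as a $P_{n-1}$-space. You work in the explicit $\overline{\mathfrak{p}}_n$-coordinates, compute that $N_n$ translates exactly the last column of $A$, define $\mu$ by deleting that column, and then verify $P_{n-1}$-equivariance by expanding $A'AA'^{-1}$ in blocks. The paper instead proves the intrinsic identity
\[
\mathfrak{l}_n^*/\ad(\mathfrak{n}_n)\bar h \;=\; (\mathfrak{l}_n^{h})^*
\]
via the one-line duality $(\ad(\eta)\bar h)(\xi)=-(\ad(\xi)\bar h)(\eta)$; since $L_n^h\cong P_{n-1}$, this quotient is canonically $\mathfrak{p}_{n-1}^*$, and the $P_{n-1}$-equivariance is then automatic (restriction of linear functionals from $\mathfrak{l}_n$ to the subalgebra $\mathfrak{l}_n^h$ is equivariant by construction). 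The paper's route thus avoids your block-matrix verification entirely, while your coordinate approach has the compensating virtue of making the map $\mu$ completely explicit, which is convenient for the depth/eigenvalue computations later in Section~\ref{S:moment}.
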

\begin{proof}
From the exact sequence $0\rightarrow\mathfrak{l}_n(k)^{\ast}\rightarrow\mathfrak{p}_n(k)^{\ast}
\rightarrow\mathfrak{n}_n(k)^{\ast}\rightarrow 0$, we get $$\mathfrak{p}_{n}(k)^{\ast}/P_{n}(k)=
\mathfrak{l}_{n}(k)^{\ast}/P_n(k)\bigsqcup(\mathfrak{p}_n(k)^{\ast}-
\mathfrak{l}_n(k)^{\ast})/P_{n}(k).$$ As $N_{n}$ acts trivially on $\mathfrak{l}_{n}(k)^{\ast}$,
we get $$\mathfrak{l}_{n}(k)^{\ast}/P_n(k)=\mathfrak{l}_{n}(k)^{\ast}/L_n(k).$$

Choose an element $\overline{h}\in\mathfrak{p}_{n}(k)^{\ast}$ such that $0\neq h=
\overline{h}|_{\mathfrak{n}_{n}(k)^{\ast}}\in\mathfrak{n}_{n}(k)^{\ast}$ and
$\overline{h}|_{\mathfrak{l}_{n}(k)}=0$. Due to the fact that $L_{n}(k)$ acts transitively on 
$\mathfrak{n}_{n}(k)^{\ast}-\{0\}$, any $P_{n}(k)$ orbit in $\mathfrak{p}_n(k)^{\ast}-
\mathfrak{l}_n(k)^{\ast}$ intersects with $\mathfrak{l}_{n}(k)^{\ast}+\overline{h}$. From this we
get $$(\mathfrak{p}_{n}(k)^{\ast}-\mathfrak{l}_{n}(k)^{\ast})/P_n(k)\cong(\mathfrak{l}_{n}(k)^{\ast}
+\overline{h})/P_n(k)^{h},$$ where $P_{n}(k)^{h}=\Stab_{P_n(k)}(h)$. Moreover we write $L_{n}(k)^{h}
=\Stab_{L_n(k)}(h)$, and write $\mathfrak{p}_{n}(k)^{h}$ (or $\mathfrak{l}_{n}(k)^{h}$) for the Lie 
algebra of $P_{n}(k)^{h}$ (or of $L_{n}(k)^{h}$). As $N_{n}(k)$ is abelian, we have $N_{n}(k)
\subset P_n(k)^{h}$. Thus, $$P_n(k)^{h}=N_{n}(k)\rtimes L_n(k)^{h}.$$  

Since $N_{n}(k)$ acts trivially on $\mathfrak{l}_{n}(k)^{\ast}$, its action on 
$\mathfrak{l}_{n}(k)^{\ast}+\overline{h}$ is through translations. Actually, $$\exp(X)\cdot(g+
\overline{h})=g+\overline{h}+\ad(X)\overline{h}$$ for any $X\in\mathfrak{n}_{n}(k)$ and any $g\in
\mathfrak{l}_{n}(k)^{\ast}.$ In the below we show that \begin{equation}\label{Eq1}
\mathfrak{l}_{n}(k)^{\ast}/\ad(\mathfrak{n}_{n}(k))\overline{h}=(\mathfrak{l}_{n}(k)^{h})^{\ast}.
\end{equation} From this it follows that $$(\mathfrak{l}_n(k)^{\ast}+ h)/P_n(k)^{h}\cong
\mathfrak{l}_{n}(k)^{h}/L_{n}(k)^{h}.$$ It is easy to check directly that $L_n(k)^{h}\cong P_{n-1}
(k)$. Then it follows that $$(\mathfrak{p}_{n}(k)^{\ast}-\mathfrak{l}_{n}(k)^{\ast})/P_{n}(k)\cong
\mathfrak{p}_{n-1}(k)^{\ast}/P_{n-1}(k).$$ Therefore, $$\mathfrak{p}_{n}(k)^{\ast}/P_{n}(k)=
\mathfrak{l}_{n}(k)^{\ast}/L_{n}(k)\bigsqcup\mathfrak{p}_{n-1}(k)^{\ast}/P_{n-1}(k).$$

Now we show $\mathfrak{l}_{n}(k)^{\ast}/\ad(\mathfrak{n}_{n}(k))\overline{h}=(\mathfrak{l}_{n}
(k)^{h})^{\ast}.$ Since the pairing between $\mathfrak{l}_{n}(k)$ and $\mathfrak{l}_{n}(k)^{\ast}$ is
nondegenerate. It is equivalent to show \begin{equation}\label{Eq2}\{\xi\in\mathfrak{l}_{n}(k):
(\ad(\eta)\overline{h})(\xi)=0,\forall\eta\in\mathfrak{n}_{n}(k)\}=\mathfrak{l}_{n}(k)^{h}.
\end{equation} This follows from $$(\ad(\eta)\overline{h})(\xi)=-\overline{h}([\eta,\xi])=
\overline{h}([\xi,\eta])=-(\ad(\xi)\overline{h})(\eta).$$
\end{proof}

\begin{lemma}\label{L:Porbit-invariant}
Assume $\overline{h}|_{\mathfrak{l}_{n}(k)^{\ast}}\!=\!0$ and $0\!\neq\!h\!=\!
\overline{h}|_{\mathfrak{n}_{n}(k)^{\ast}}\!\in\!\mathfrak{n}_{n}(k)^{\ast}$. Then, \begin{itemize}
\item[(1),] $\ad(\mathfrak{l}_{n}(k)^{h})\overline{h}=0$.
\item[(2),] the map $$\mathfrak{n}_{n}(k)\rightarrow\mathfrak{l}_{n}(k)^{\ast},\xi\mapsto
(\ad\xi)\overline{h}$$ is injective.
\end{itemize}
\end{lemma}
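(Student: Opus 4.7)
The plan is to exploit the Levi decomposition $\mathfrak{p}_n(k)=\mathfrak{l}_n(k)\oplus\mathfrak{n}_n(k)$ together with two structural facts: $\mathfrak{n}_n(k)$ is abelian and is an ideal in $\mathfrak{p}_n(k)$, while $\mathfrak{l}_n(k)$ is a subalgebra. In particular $[\mathfrak{l}_n(k),\mathfrak{l}_n(k)]\subset\mathfrak{l}_n(k)$ and $[\mathfrak{l}_n(k),\mathfrak{n}_n(k)]\subset\mathfrak{n}_n(k)$, and the two pieces of $\overline{h}$ distinguished by the hypothesis (trivial on the Levi, equal to the nonzero $h$ on the nilradical) match exactly this decomposition.

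For (1), fix $Y\in\mathfrak{l}_n(k)^{h}$ and any $Z\in\mathfrak{p}_n(k)$. Write $Z=Z_{\mathfrak{l}}+Z_{\mathfrak{n}}$ with $Z_{\mathfrak{l}}\in\mathfrak{l}_n(k)$ and $Z_{\mathfrak{n}}\in\mathfrak{n}_n(k)$, so that $[Y,Z]=[Y,Z_{\mathfrak{l}}]+[Y,Z_{\mathfrak{n}}]$, with the first summand in $\mathfrak{l}_n(k)$ and the second in $\mathfrak{n}_n(k)$. The defining formula $(\ad(Y)\overline{h})(Z)=-\overline{h}([Y,Z])$ then splits into two terms: the first vanishes because $\overline{h}|_{\mathfrak{l}_n(k)}=0$; the second equals $-h([Y,Z_{\mathfrak{n}}])=(\ad(Y)h)(Z_{\mathfrak{n}})=0$ since $Y\in\mathfrak{l}_n(k)^{h}$. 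Thus $\ad(Y)\overline{h}=0$.

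For (2), take $\xi\in\mathfrak{n}_n(k)$ with $(\ad\xi)\overline{h}=0$. Because $\mathfrak{n}_n(k)$ is abelian, $[\xi,Z]=0$ for $Z\in\mathfrak{n}_n(k)$, so $(\ad\xi)\overline{h}$ is automatically zero on $\mathfrak{n}_n(k)$; hence the vanishing condition reduces to
\[
h([\xi,Y])=\overline{h}([\xi,Y])=0\qquad\text{for all }Y\in\mathfrak{l}_n(k),
\]
where the first equality uses $[\xi,Y]\in\mathfrak{n}_n(k)$. Under the standard identifications $\mathfrak{l}_n(k)\cong\mathfrak{gl}(n-1,k)$ and $\mathfrak{n}_n(k)\cong k^{n-1}$, the bracket $[Y,\xi]$ is nothing but the defining action $Y\xi$ of $\mathfrak{gl}(n-1,k)$ on $k^{n-1}$. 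The condition becomes $h(Y\xi)=0$ for all $Y\in\mathfrak{gl}(n-1,k)$. If $\xi\neq 0$, the map $Y\mapsto Y\xi$ from $\mathfrak{gl}(n-1,k)$ to $k^{n-1}$ is surjective, so $h$ must vanish on all of $k^{n-1}$, contradicting $h\neq 0$. Hence $\xi=0$, proving injectivity.

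There is essentially no obstacle here; the statement is a structural consequence of the Levi decomposition and the nondegeneracy of the $\mathfrak{l}$-action on $\mathfrak{n}$. If I wanted to avoid invoking the concrete matrix realization in (2), I could replace the surjectivity argument by the observation that $\beta^{t}C\gamma=\tr(C\gamma\beta^{t})$ for representing vectors $\beta,\gamma\in k^{n-1}$, so vanishing for all $C$ forces $\gamma\beta^{t}=0$, and $\beta\neq 0$ then forces $\gamma=0$.
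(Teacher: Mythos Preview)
Your proof is correct. For part (1), your argument is essentially identical to the paper's: both split the test vector along $\mathfrak{l}_n(k)\oplus\mathfrak{n}_n(k)$ and kill the two pieces using, respectively, $\overline{h}|_{\mathfrak{l}_n(k)}=0$ and the stabilizer condition $\ad(Y)h=0$.

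For part (2), your route differs from the paper's. The paper normalizes $\overline{h}$ to a specific element (using the transitive $L_n(k)$-action on $\mathfrak{n}_n(k)^{\ast}\setminus\{0\}$) and then computes $\pr'([\eta,\xi'])$ explicitly in coordinates. You instead argue structurally: the vanishing of $(\ad\xi)\overline{h}$ on $\mathfrak{l}_n(k)$ says $h(Y\xi)=0$ for all $Y\in\mathfrak{gl}(n-1,k)$, and surjectivity of $Y\mapsto Y\xi$ for $\xi\neq 0$ forces $h=0$. Your argument has the advantage of working uniformly for all nonzero $h$ without first reducing to a preferred representative, and it makes transparent that the only input is the transitivity of the standard $\mathfrak{gl}(n-1,k)$-action on $k^{n-1}\setminus\{0\}$; the paper's computation is shorter to write down once the normalization is made but hides this structural reason.
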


\begin{proof}
For assertion (1), let $\xi\in\mathfrak{l}_{n}(k)^{h}$. For any $\eta\in\mathfrak{n}_{n}(k)$,
$(\ad(\xi)\overline{h})(\eta)=0$ since $\ad(\xi)\overline{h}\in\mathfrak{l}_{n}(k)^{\ast}$. For any
$\eta\in\mathfrak{l}_{n}(k)$, $$(\ad(\xi)\overline{h})(\eta)=-\overline{h}([\xi,\eta])=0$$ since
$\overline{h}|_{\mathfrak{l}_{n}(k)^{\ast}}=0$. Thus, $\ad(\xi)\overline{h}=0.$ Therefore,
$\ad(\mathfrak{l}_{n}(k)^{h})\overline{h}=0$.

For assertion (2), we may assume that $\overline{h}=\pr'(\xi)$, where $\xi'=\left(\begin{array}{cc}
0_{n-1}&0_{(n-1)\times 1}\\\beta'^{t}&0\end{array}\right)$, $\beta'^{t}=(0,\dots,0,1)$. Then, a
direct calculation of $\ad(\eta)\overline{h}=\pr'([\eta,\xi'])$ ($\eta\in\mathfrak{n}_{n}(k)$)
shows the assertion.
\end{proof}

Now assume $\overline{h}|_{\mathfrak{l}_{n}(k)^{\ast}}=0$ and $0\neq h=
\overline{h}|_{\mathfrak{n}_{n}(k)}\in\mathfrak{n}_{n}(k)^{\ast}$. Choose a complement of
$(\ad(\mathfrak{n}_{n}(k))\overline{h}$ in $\mathfrak{l}_{n}(k)^{\ast}$, denoted by $V_{h}$.

\begin{lemma}\label{L:Porbit-stabilizer}
Assume $\overline{h}|_{\mathfrak{l}_{n}(k)^{\ast}}\!=\!0$ and $0\!\neq\!h\!=\!
\overline{h}|_{\mathfrak{n}_{n}(k)^{\ast}}\!\in\!\mathfrak{n}_{n}(k)^{\ast}$. Then, we have the following 
assertions, \begin{itemize} 
\item[(1),]each $P_{n}(k)$ orbit intersecting with $\mathfrak{l}_{n}(k)^{\ast}+\overline{h}$ has a 
representative of the form $f=g+\overline{h}$ where $g\in V_{h}.$
\item[(2),] Two elements $g_1+\overline{h}$ and $g_2+\overline{h}$ ($g_1,g_2\in V_{h}$) are in one $P_{n}(k)$
orbit if and only if $[g_1]$ and $[g_2]$ are in one $L_{n}(k)^{h}$ orbit, where$$[g_{i}]\!=
\!g_{i}\!+\!\ad(\mathfrak{n}_{n}(k))\overline{h}\in\mathfrak{l}_{n}(k)^{\ast}/\ad(\mathfrak{n}_{n}(k))
\overline{h}\!=\!(\mathfrak{l}_{n}(k)^{\overline{h}})^{\ast}$$ ($i=1,2$) are considered as elements in
$(\mathfrak{l}_{n}(k)^{h})^{\ast}$.
\item[(3),] Assume $g\in V_{h}$. Then, the map $$\Stab_{P_{n}(k)}(g+\overline{h})\rightarrow L_{n}(k),\
nl\mapsto l$$ ($n\in N_{n}(k)$, $l\in L_{n}(k)$) gives an isomorphism $$\Stab_{P_{n}(k)}(g+\overline{h})
\cong\Stab_{L_{n}(k)^{h}}([g]).$$
\end{itemize}
\end{lemma}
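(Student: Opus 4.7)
The plan is to reduce all three assertions to a single computation of $nl\cdot(g_1+\overline{h})$ for $n=\exp X\in N_n(k)$, $l\in L_n(k)$, combined with the translation formula
$$\exp(X)\cdot(g+\overline{h})=g+\overline{h}+\ad(X)\overline{h},\qquad g\in\mathfrak{l}_n(k)^{\ast},\ X\in\mathfrak{n}_n(k),$$
already recorded in the proof of Lemma \ref{L:Porbit-inductive}, together with the injectivity of $X\mapsto\ad(X)\overline{h}$ from Lemma \ref{L:Porbit-invariant}(2). Assertion (1) is then immediate: writing a given $f\in\mathfrak{l}_n(k)^{\ast}$ as $f=g+\ad(X)\overline{h}$ with $g\in V_h$ (possible by the definition of $V_h$), the translation formula yields $f+\overline{h}=\exp(X)\cdot(g+\overline{h})$, placing $f+\overline{h}$ in the $P_n(k)$-orbit of a point of $V_h+\overline{h}$.

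The crucial preliminary step for (2) is the observation that $L_n(k)^h$ stabilizes $\overline{h}$ itself, not merely $h$. Indeed, any $l\in L_n(k)$ normalizes both $\mathfrak{l}_n(k)$ and $\mathfrak{n}_n(k)$, so $(l\cdot\overline{h})|_{\mathfrak{l}_n(k)}=0$ and $(l\cdot\overline{h})|_{\mathfrak{n}_n(k)}=l\cdot h$, which coincides with $\overline{h}$ exactly when $l\in L_n(k)^h$. Using this, one expands
$$nl\cdot(g_1+\overline{h})=l\cdot g_1+l\cdot\overline{h}+\ad(X)(l\cdot\overline{h}),$$
noting that $\ad(X)$ annihilates $l\cdot g_1\in\mathfrak{l}_n(k)^{\ast}$ because $[X,\mathfrak{p}_n(k)]\subset\mathfrak{n}_n(k)$ while $l\cdot g_1$ vanishes on $\mathfrak{n}_n(k)$, and that the higher-order terms in $\exp(\ad X)$ die by the abelianness of $\mathfrak{n}_n(k)$. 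Restricting the equation $nl\cdot(g_1+\overline{h})=g_2+\overline{h}$ to $\mathfrak{n}_n(k)$ forces $l\cdot h=h$ and hence $l\cdot\overline{h}=\overline{h}$; the remaining equation $l\cdot g_1+\ad(X)\overline{h}=g_2$ is exactly the relation $[l\cdot g_1]=[g_2]$ in $(\mathfrak{l}_n(k)^h)^{\ast}$. The converse direction is the same calculation read backwards, with $X$ chosen as the unique element satisfying $\ad(X)\overline{h}=g_2-l\cdot g_1$, whose existence and uniqueness are provided by Lemma \ref{L:Porbit-invariant}(2).

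For (3), apply the analysis of (2) with $g_1=g_2=g$. The semidirect decomposition $P_n(k)=N_n(k)\rtimes L_n(k)$ makes $nl\mapsto l$ a group homomorphism; its restriction to $\Stab_{P_n(k)}(g+\overline{h})$ has image equal to $\Stab_{L_n(k)^h}([g])$ by (2), and trivial kernel by Lemma \ref{L:Porbit-invariant}(2), because $\exp(X)\in\Stab_{P_n(k)}(g+\overline{h})$ forces $\ad(X)\overline{h}=0$ and hence $X=0$. The main obstacle in the whole argument is the preliminary observation that $L_n(k)^h$ fixes $\overline{h}$ rather than merely its $\mathfrak{n}_n(k)$-component; once this is in hand, the three assertions are essentially bookkeeping in the semidirect product, driven by the translation formula and the injectivity of $X\mapsto\ad(X)\overline{h}$.
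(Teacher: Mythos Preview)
Your proof is correct and follows essentially the same route as the paper: both reduce everything to the translation formula $\exp(X)\cdot(g+\overline{h})=g+\overline{h}+\ad(X)\overline{h}$, the fact that $L_n(k)^h$ fixes $\overline{h}$, and the injectivity in Lemma~\ref{L:Porbit-invariant}(2). The only notable difference is that you establish $l\cdot\overline{h}=\overline{h}$ for $l\in L_n(k)^h$ directly at the group level (using that $l$ normalizes $\mathfrak{l}_n(k)$ and $\mathfrak{n}_n(k)$), whereas the paper invokes the infinitesimal statement $\ad(\mathfrak{l}_n(k)^h)\overline{h}=0$ from Lemma~\ref{L:Porbit-invariant}(1); your version is arguably cleaner since it avoids any implicit appeal to connectedness of $L_n(k)^h$.
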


\begin{proof}
The assertion $(1)$ follows from Equation $(1)$.

For assertion $(2)$, we show the necessarity first. Assume $g_2+\overline{h}=x\cdot(g_1+\overline{h})$
for some $x\in P_{n}(k)$. Then, $x\in P_{n}(k)^{h}=N_{n}(k)\rtimes L_{n}(k)^{h}$. Write $x=nl=
\exp(\xi)l$ for some $\xi\in\mathfrak{n}_{n}(k)$ and $l\in L_{n}(k)^{h}$. Then, $$\exp(-\xi)\cdot
(g_2+\overline{h})=l\cdot(g_1+\overline{h}).$$ By Lemma \ref{L:Porbit-invariant}, we have $l\cdot
\overline{h}=0$. As $\mathfrak{n}_{n}(k)$ is an abelian ideal of $\mathfrak{n}_{n}(k)$, we have
$\exp(-\xi)\cdot(g_2+\overline{h})=(g_2-\ad(\xi)\overline{h})+\overline{h}$. Thus, $g_2-\ad(\xi)
\overline{h}=l\cdot g_1$. This just means, $[g_1]$ and $[g_2]$ are in one $L_{n}(k)^{h}$ orbit. The
sufficiency could be shown with similar facts used in showing the necessarity.

For assertion $(3)$, write $n=\exp(\xi)$ ($\xi\in\mathfrak{n}_{n}(k)$). By the above proof for (2),
we see that $nl\in\Stab_{P_{n}(k)}(g+\overline{h})$ if and only if $l\in L_{n}(k)^{h}$ and $$g-
\ad(\xi)\overline{h}=l\cdot g.$$ The last is just the condition for $l\in\Stab_{L_{n}(k)^{h}}([g])$.
Thus, the map $$\Stab_{P_{n}(k)}(g+\overline{h})\rightarrow L_{n}(k),\ nl\mapsto l$$ gives a
surjection $\Stab_{P_{n}(k)}(g+\overline{h})\rightarrow\Stab_{L_{n}(k)^{h}}([g]).$ On the other
hand, suppose $l=1$. Then, $\ad(\xi)\overline{h}=0$. By Lemma \ref{L:Porbit-invariant}(2), this
implies that $\xi=0$. Thus, the above surjection $\Stab_{P_{n}(k)}(g+\overline{h})\rightarrow
\Stab_{L_{n}(k)^{h}}([g])$ is an isomorphism.
\end{proof}

We have some remarks regarding Lemma \ref{L:Porbit-stabilizer}(3). \begin{itemize}
\item[(1),] the statement is valid for any $g\in\mathfrak{l}_{n}(k)^{\ast}$. 
\item[(2),] $(\ad(\mathfrak{n}_{n}(k))\overline{h}$ is stable under the action of $L_{n}(k)^{h}$, but 
$L_{n}(k)^{h}$ is not a reductive subgroup. Actually one can verify that we couldn't take $V_{h}$ stable 
under $L_{n}(k)^{h}$. 
\item[(3),] we have $$\Stab_{L_{n}(k)^{h}}(g)\subset\Stab_{P_{n}(k)}(g+\overline{h})$$ and 
$$\Stab_{L_{n}(k)^{h}}(g)\subset\Stab_{L_{n}(k)^{h}}([g]),$$ but neither is an equality in general.
\end{itemize}

The following lemma is easy to show. 
\begin{lemma}\label{L:Porbit-stabilizer2}
When $h=0$, we have $f\in\mathfrak{l}_{n}(k)^{\ast}\subset\mathfrak{n}_{n}(k)^{\ast}$. Then, 
$$\Stab_{P_{n}(k)}(f)=N_{n}(k)\rtimes\Stab_{L_{n}(k)}(f).$$ 
\end{lemma}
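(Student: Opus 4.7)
The plan is to exploit the semidirect product decomposition $P_n(k) = N_n(k) \rtimes L_n(k)$ and to reduce the stabilizer computation to the action of $L_n(k)$ on $\mathfrak{l}_n(k)^{\ast}$, using two facts that have already been observed earlier in the section: that $N_n(k)$ acts trivially on $\mathfrak{l}_n(k)^{\ast}$, and that $\mathfrak{l}_n(k)^{\ast}$ sits inside $\mathfrak{p}_n(k)^{\ast}$ as the $P_n(k)$-submodule of linear forms vanishing on the ideal $\mathfrak{n}_n(k)$.

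First I would verify that the right-hand side $N_n(k) \rtimes \Stab_{L_n(k)}(f)$ is contained in $\Stab_{P_n(k)}(f)$. The inclusion $N_n(k) \subset \Stab_{P_n(k)}(f)$ is immediate from the hypothesis $f \in \mathfrak{l}_n(k)^{\ast}$ combined with the fact that $N_n(k)$ acts trivially on $\mathfrak{l}_n(k)^{\ast}$ (this was used in the proof of Lemma \ref{L:Porbit-inductive}). The inclusion $\Stab_{L_n(k)}(f) \subset \Stab_{P_n(k)}(f)$ is tautological.

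For the reverse inclusion, I would take an arbitrary $x \in \Stab_{P_n(k)}(f)$ and use the unique decomposition $x = n l$ with $n \in N_n(k)$ and $l \in L_n(k)$. Because $\mathfrak{l}_n(k)^{\ast} \subset \mathfrak{p}_n(k)^{\ast}$ is $P_n(k)$-stable (the annihilator of the ideal $\mathfrak{n}_n(k)$), the element $l \cdot f$ again lies in $\mathfrak{l}_n(k)^{\ast}$, and then the triviality of the $N_n(k)$-action on this subspace gives $x \cdot f = n \cdot (l \cdot f) = l \cdot f$. Thus $x \cdot f = f$ forces $l \cdot f = f$, i.e., $l \in \Stab_{L_n(k)}(f)$, while $n \in N_n(k)$ is unconstrained. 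This exhibits $x$ as an element of $N_n(k) \rtimes \Stab_{L_n(k)}(f)$.

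There is no real obstacle here; the statement is essentially a bookkeeping consequence of the Levi decomposition together with the $P_n(k)$-equivariance of the filtration $\mathfrak{l}_n(k)^{\ast} \subset \mathfrak{p}_n(k)^{\ast}$. The only thing to be careful about is making sure that the product structure of the stabilizer really is a semidirect (and not just set-theoretic) product, which follows from $N_n(k)$ being normal in $P_n(k)$ and $\Stab_{L_n(k)}(f)$ normalizing $N_n(k)$ inside $P_n(k)$.
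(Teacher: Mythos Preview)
Your proof is correct. The paper does not actually give a proof of this lemma; it only says ``The following lemma is easy to show,'' and your argument supplies exactly the straightforward verification the authors had in mind, using the Levi decomposition $P_n(k)=N_n(k)\rtimes L_n(k)$ together with the triviality of the $N_n(k)$-action on $\mathfrak{l}_n(k)^{\ast}$ already noted in the proof of Lemma~\ref{L:Porbit-inductive}.
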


\begin{theorem}\label{T:Porbit}
Any $P_{n}(k)$ coadjoint orbit $P_{n}(k)\cdot f$ is characterized by an integer $j$ ($0\leq j\leq n-1$)
and an $L_{n-j}(k)$ coadjoint orbit $L_{n-j}(k)\cdot g$. Moreover, we have $\Stab_{P_{n}(k)}f\cong
N_{n-j}(k)\rtimes\Stab_{L_{n-j}(k)}(g)$.
\end{theorem}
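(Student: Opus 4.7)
The plan is to induct on $n$, feeding the inductive decomposition of Lemma~\ref{L:Porbit-inductive} into the stabilizer descriptions of Lemmas~\ref{L:Porbit-stabilizer} and \ref{L:Porbit-stabilizer2}. The base case is $n=1$, where $P_{1}(k)=\{1\}$ and $\mathfrak{p}_{1}(k)^{\ast}=0$: there is a unique orbit, corresponding to $j=0$ and the trivial $L_{1}(k)$-orbit, and the stabilizer formula holds trivially.

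For the inductive step, given any $P_{n}(k)$-orbit $P_{n}(k)\cdot f$, I use Lemma~\ref{L:Porbit-inductive} to place it in exactly one of two classes. In the first class, $f\in\mathfrak{l}_{n}(k)^{\ast}$ and the orbit is determined by its image in $\mathfrak{l}_{n}(k)^{\ast}/L_{n}(k)$; I then set $j=0$, take $g=f$ as the $L_{n}(k)$-orbit representative, and obtain $\Stab_{P_{n}(k)}(f)=N_{n}(k)\rtimes\Stab_{L_{n}(k)}(g)$ directly from Lemma~\ref{L:Porbit-stabilizer2}. In the second class, $f\not\in\mathfrak{l}_{n}(k)^{\ast}$; the proof of Lemma~\ref{L:Porbit-inductive} already identifies the orbit with one in $\mathfrak{p}_{n-1}(k)^{\ast}/P_{n-1}(k)$ via the isomorphism $L_{n}(k)^{h}\cong P_{n-1}(k)$ and the identification $\mathfrak{l}_{n}(k)^{\ast}/\ad(\mathfrak{n}_{n}(k))\overline{h}=(\mathfrak{l}_{n}(k)^{h})^{\ast}$ from Equation~(1). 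Applying the inductive hypothesis to this $P_{n-1}(k)$-orbit produces an integer $j'$ with $0\leq j'\leq n-2$ and an $L_{n-1-j'}(k)$-coadjoint orbit; I then set $j=j'+1$, which lies in $\{1,\dots,n-1\}$, and keep the same $L_{n-j}(k)=L_{(n-1)-j'}(k)$-orbit as the invariant of $P_{n}(k)\cdot f$.

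For the stabilizer in this second case, I chain two isomorphisms: first Lemma~\ref{L:Porbit-stabilizer}(3) gives $\Stab_{P_{n}(k)}(f)\cong\Stab_{L_{n}(k)^{h}}([g])$, and then the isomorphism $L_{n}(k)^{h}\cong P_{n-1}(k)$ transports this to $\Stab_{P_{n-1}(k)}([g])$. By the inductive hypothesis applied to the $P_{n-1}(k)$-orbit of $[g]$, this latter stabilizer is isomorphic to $N_{(n-1)-j'}(k)\rtimes\Stab_{L_{(n-1)-j'}(k)}(g')$, which upon rewriting $(n-1)-j'=n-j$ is precisely the required $N_{n-j}(k)\rtimes\Stab_{L_{n-j}(k)}(g')$.

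The only real care needed is in the last paragraph: I must check that the well-definedness of the invariant $(j,L_{n-j}(k)\cdot g)$ — that is, the claim that two $P_{n}(k)$-orbits producing the same $j$ and $L_{n-j}(k)$-orbit actually coincide — follows from the disjoint union in Lemma~\ref{L:Porbit-inductive} combined with the inductive assumption, and symmetrically that distinct inputs produce distinct orbits. The main (modest) obstacle is purely bookkeeping: keeping the index shifts between levels consistent so that $N_{n-j}(k)$ and $L_{n-j}(k)$ in the final formula line up correctly with the unipotent/Levi factors accumulated through the induction.
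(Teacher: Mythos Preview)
Your proposal is correct and follows essentially the same approach as the paper: iterate Lemma~\ref{L:Porbit-inductive} to obtain the decomposition $\mathfrak{p}_{n}(k)^{\ast}/P_{n}(k)=\bigsqcup_{0\leq j\leq n-1}\mathfrak{l}_{n-j}(k)^{\ast}/L_{n-j}(k)$, and combine Lemma~\ref{L:Porbit-stabilizer}(3) with Lemma~\ref{L:Porbit-stabilizer2} for the stabilizer. The paper simply compresses your explicit induction into the single phrase ``applying Lemma~\ref{L:Porbit-inductive} inductively,'' but the underlying argument is the same.
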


\begin{proof}
Applying Lemma \ref{L:Porbit-inductive} inductively, we get $$\mathfrak{p}_{n}(k)^{\ast}/P_{n}(k)=
(\bigsqcup_{0\leq j\leq n-2}\mathfrak{l}_{n-j}(k)^{\ast}/L_{n-j}(k))\bigsqcup\mathfrak{p}_{1}
(k)^{\ast}/P_{1}(k).$$ As both $P_{1}(k)$ and $L_{1}(k)$ are the trivial group, we get $\mathfrak{p}_{1}
(k)^{\ast}/P_{1}(k)=\mathfrak{l}_{1}(k)^{\ast}/L_{1}(k)=\textrm{singleton}.$ Thus, \begin{equation}
\label{Eq:depth}\mathfrak{p}_{n}(k)^{\ast}/P_{n}(k)=\bigsqcup_{0\leq j\leq n-1}\mathfrak{l}_{n-j}
(k)^{\ast}/L_{n-j}(k).\end{equation} For any $j$, $L_{n-j}(k)\cong\G_{n-1-j}(k)$. Thus, we get the
first statement of the theorem.

If a $P_{n}(k)$ coadjoint orbit $P_{n}(k)\cdot f$ is characterized by an integer $j$ ($0\leq j\leq n-1$) 
and an $L_{n-j}(k)$ coadjoint orbit $L_{n-j}\cdot g$, then $\Stab_{P_{n}(k)}f\cong\Stab_{L_{n-j}}(g)$ by 
Lemma \ref{L:Porbit-stabilizer}(3) and Lemma \ref{L:Porbit-stabilizer2}.
\end{proof}

Note that $N_{n-j}(k)\cong k^{n-1-j}$. With Theorem \ref{T:Porbit}, we not only classified all $P$ coadjoint 
orbits, but also calculated their {\it stabilizers}. 

\begin{definition}\label{D:depth}
According to Equation (\ref{Eq:depth}), any $P_{n}(k)$-coadjoint orbit $P_{n}(k)\cdot f\subset
\mathfrak{p}_{n}(k)^{\ast}$ is parameterized by an integer $j$ ($0\leq j\leq n-1$) and a $L_{n-j}(k)$ 
coadjoint orbit $L_{n-j}(k)\cdot g\subset \mathfrak{l}_{n-j}(k)^{\ast}$. We call $j+1$ the {\it depth} 
of this orbit (and points in it). 
\end{definition}

By Theorem \ref{T:Porbit}, $\Stab_{P_{n}(k)}(f)\cong N_{n-j}(k)\rtimes\Stab_{L_{n-j}(k)}(g)$. We call $f$ 
a {\it semisimple element} (or $P_{n}(k)\cdot f$ a {\it semisimple orbit}) if $g$ is a semisimple element 
in $\mathfrak{l}_{n-j}(k)^{\ast}$ (or $L_{n-j}(k)\cdot g$ is a semisimple orbit). The latter means
$\Stab_{L_{n-j}(k)}(g)$ is an a reductive (algebraic) subgroup of $L_{n-j}(k)\cong\GL(n\!-\!1\!-\!j,k)$. 
In this case we call the $(n\!-\!1\!-\!j)$ eigenvalues of $\pr^{-1}(g)\in\mathfrak{gl}(n\!-\!1\!-\!j,k)$ 
the eigenvalues of $f$.

In the case of $j=n-1$, we have $\Stab_{P_{n}(k)}(f)=\Stab_{L_{n-j}(k)}(g)=\{1\}$, this gives an open orbit
in $\mathfrak{p}_{n}(k)^{\ast}$. In the case of $j<n-1$, we have $\dim\Stab_{P_{n}(k)}(f)=
\dim\Stab_{L_{n-j}(k)}(g)\geq n-1-j\geq 1$, none of these orbits is open. Again, by the proof of Lemma
\ref{L:Porbit-inductive} and Theorem \ref{T:Porbit}, we see that the complement of the above open orbit in
$\mathfrak{p}_{n}(k)^{\ast}$ is a closed subset of codimension one. This shows the following.

\begin{proposition}\label{P:openOrbit}
There is a unique open $P_{n}(k)$-coadjoint orbit in $\mathfrak{p}_{n}(k)^{\ast}$, which we denote by 
$\mathcal{O}_{0}$. It is dense and its complement is a codimension one closed subset. For any $f\in
\mathcal{O}_{0}$, we have $\Stab_{P_{n}(k)}(f)=1$.

In particular , $\mathcal{O}_{0}$ is the only strongly regular coadjoint orbit of $P_{n}(k)$ in
$\mathfrak{p}_{n}(k)^{\ast}$. Consequently, $P_{n}(k)$ has one and only one discrete series representation, 
which we denote by $\tau$. Moreover $\tau$ is attached to $\mathcal{O}_{0}$, and $L^{2}(P_{n}(k))\cong\tau
\otimes\tau$. 
\end{proposition}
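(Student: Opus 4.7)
The geometric content of the proposition can be read directly from Theorem~\ref{T:Porbit} together with Equation~(\ref{Eq:depth}). The stratum indexed by $j=n-1$ is $\mathfrak{l}_{1}(k)^{\ast}/L_{1}(k)$, which is a single point since $L_{1}(k)$ and $\mathfrak{l}_{1}(k)$ are both trivial. The stabilizer formula of Theorem~\ref{T:Porbit} then gives $\Stab_{P_{n}(k)}(f)\cong N_{1}(k)\rtimes\Stab_{L_{1}(k)}(g)=\{1\}$, so the corresponding $P_{n}(k)$-orbit has dimension $\dim P_{n}(k)=n(n-1)=\dim\mathfrak{p}_{n}(k)^{\ast}$ and is open; uniqueness is immediate because the parameter space is a singleton. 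Call this orbit $\mathcal{O}_{0}$.

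To control the complement I would compute the codimension of each of the remaining strata. For $j<n-1$ a generic orbit has stabilizer $N_{n-j}(k)\rtimes\Stab_{L_{n-j}(k)}(g)$ of dimension $(n-j-1)+(n-j-1)=2(n-j-1)$, since generically $\Stab_{L_{n-j}(k)}(g)$ is a maximal torus of $\GL_{n-j-1}(k)$. The parameter space $\mathfrak{l}_{n-j}(k)^{\ast}/L_{n-j}(k)$ has dimension $n-j-1$, so the whole stratum has codimension $n-j-1$ in $\mathfrak{p}_{n}(k)^{\ast}$. This codimension equals $1$ precisely when $j=n-2$ and is at least $2$ for $j\leq n-3$. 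Hence the complement of $\mathcal{O}_{0}$ is closed of codimension one, and $\mathcal{O}_{0}$ is dense.

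Strong regularity of $\mathcal{O}_{0}$ is then automatic: any strongly regular orbit is regular, $\mathcal{O}_{0}$ is the unique orbit of maximal dimension, and triviality of its stabilizer forces $\mathfrak{s}(f)=0$, which is vacuously maximal. For the representation-theoretic consequence, Duflo's orbit-method construction attaches to $\mathcal{O}_{0}$ at least one irreducible unitary representation $\tau$; admissibility is automatic because $\Stab_{P_{n}(k)}(f)=1$, and a polarization is furnished for example by $\overline{\mathfrak{p}}_{n}(k)$. Since every discrete series of $P_{n}(k)$ must be attached to a strongly regular admissible orbit and $\mathcal{O}_{0}$ is the only candidate, $\tau$ is the unique discrete series.

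The isomorphism $L^{2}(P_{n}(k))\cong\tau\otimes\tau$ is the step I expect to be the main obstacle. Informally, the codimension-one complement of $\mathcal{O}_{0}$ is a Plancherel null set, so the Duflo orbit-method Plancherel formula collapses to a single term associated to $\mathcal{O}_{0}$; the subtlety is that $P_{n}(k)$ is non-unimodular, so one must invoke the Duflo--Moore (formal degree) operator carefully to recover the stated tensor product form, or else appeal directly to Kirillov's classical analysis of the mirabolic group where this decomposition is known. The argument here is representation-theoretic rather than purely orbit-geometric, so this is where I would concentrate the careful work.
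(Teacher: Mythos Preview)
Your argument is correct and follows the same route as the paper: both deduce the open orbit with trivial stabilizer from the $j=n-1$ case of Theorem~\ref{T:Porbit}, and both obtain the codimension-one complement from the inductive stratification of Lemma~\ref{L:Porbit-inductive} (the paper simply points to that proof, while you unwind it into the explicit stratum-codimension count $n-j-1$). One small slip: $\overline{\mathfrak{p}}_{n}(k)$ is not a subalgebra of $\mathfrak{p}_{n}(k)$ and cannot serve as a polarization at $f$; this does no damage, since strongly regular orbits are automatically well-polarizable (as recalled in Section~2), so you need not exhibit a polarization by hand.
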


We give some more precise information about the unique open $P_{n}(k)$-orbit in $\mathfrak{p}_{n}(k)^{\ast}$ 
below. 

\begin{example}\label{E:openOrbit1}
Let $$\xi_{n}=\left(\begin{array}{ccccc}0&0&\ldots&0&0\\1&0&\ldots&0&0\\\vdots&\vdots&\ddots&\vdots&\vdots
\\0&0&\ldots&0&0\\0&0&\ldots&1&0\\\end{array}\right),$$ and $f_{n}=\pr'(\xi_{n})\in\mathfrak{p}_{n}(k)^{\ast}$.
Write $h$ for the projection of $f_{n}$ to $\mathfrak{n}_{n}(k)^{\ast}$. By calculation one sees that 
$L_{n}(k)^{h}=P_{n-1}(k)$, and the element $f_{n}+\ad(\mathfrak{n}_{n}(k))\overline{h}\in
(\mathfrak{l}_{n}(k)^{h})^{\ast}$ is equal to $\pr'(\xi_{n-1})\in\mathfrak{p}_{n-1}(k)^{\ast}=
(\mathfrak{l}_{n}(k)^{h})^{\ast}$. Taking induction, one sees that $f_{n}$ represents the unique open 
$P_{n}(k)$ orbit in $\mathfrak{p}_{n}(k)^{\ast}$.
\end{example}

\begin{example}\label{E:openOrbit2}
Let $k=\mathbb{C}$. Let $a_1,\dots,a_{n-1},b_1,\dots,b_{n-1}\in\mathbb{C}$ such that $a_{i}\neq a_{j}$
($\forall i,j$, $1\leq i<j\leq n-1$) and $b_{j}\neq 0$ ($\forall j$, $1\leq j\leq n-1$). Set $$\xi'_{n}
=\left(\begin{array}{ccccc}a_1&0&\ldots&0&0\\0&a_2&\ldots&0&0\\\vdots&\vdots&\ddots&\vdots&\vdots\\0&0&
\ldots&a_{n-1}&0\\b_1&b_2&\ldots&b_{n-1}&0\\\end{array}\right)$$ and $f'_{n}=\pr'(\xi_{n})\in
\mathfrak{p}_{n}(k)^{\ast}$. We show that $f'_{n}$ represents the unique open $P_{n}(k)$ orbit in
$\mathfrak{p}_{n}(k)^{\ast}$.

Suppose $$g=\left(\begin{array}{cc}B&\beta\\0_{1\times(n-1)}&1\\\end{array}\right)\in\Stab_{P_{n}}
(f'_{n}),$$ where $B\in\GL(n-1,\mathbb{C})$ and $\beta\in\mathbb{C}^{n-1}$. Write $A=\diag\{a_1,\dots,
a_{n-1}\}$ and $\alpha=(b_1,\dots,b_{n-1})^{t}$. Then, $$\xi'_{n}=\left(\begin{array}{cc}A&0_{(n-1)
\times 1}\\\alpha^{t}&0\\\end{array}\right).$$ Due to $g\cdot f'_{n}=g\pr'(\xi'_{n})=\pr'(g\xi'_{n}g^{-1})$,
we see that $$g\in\Stab_{P_{n}}(f_{n})\Leftrightarrow g\xi'_{n}g^{-1}-\xi'_{n}\in\mathfrak{n}'_{n},$$
where \begin{eqnarray*}&&\mathfrak{n}'_{n}\\=&&\{\eta\in\mathfrak{gl}(n,\mathbb{C}):\tr(\eta\theta)=0,
\forall\theta\in\mathfrak{p}_{n}\}\\=&&\{\left(\begin{array}{cc}0_{n-1}&\alpha\\
0_{1\times(n-1)}&t\\\end{array}\right):\alpha\in\mathbb{C}^{n-1},t\in\mathbb{C}\}.\end{eqnarray*}
By calculation $$g\xi'_{n}g^{-1}= \left(\begin{array}{cc}BAB^{-1}+\beta\alpha^{t}B^{-1}&-BAB^{-1}\beta
-\beta\alpha^{t}B^{-1}\beta\\\alpha^{t}B^{-1}&-\alpha^{t}B^{-1}\beta\\\end{array}\right).$$ Thus,
$$g\in\Stab_{P_{n}}(f_{n})\Leftrightarrow BAB^{-1}+\beta\alpha^{t}B^{-1}=A\textrm{ and }\alpha^{t}B^{-1}
=\alpha^{t}.$$ This is equivalent to: $$AB-BA=\beta\alpha^{t}\textrm{ and }\alpha^{t}B=\alpha^{t}.$$
Write $$B=(x_{i,j})_{(n-1)\times(n-1)}\textrm{ and }\beta^{t}=(y_{j})_{1\leq j\leq(n-1)}.$$ From
$AB-BA=\beta\alpha^{t}$, we get $$(a_{i}-a_{j})x_{i,j}=b_{j}y_{i},\forall i,j,1\leq i,j\leq n-1.$$
Let $i=j$, from $b_{j}\neq 0$ we get $y_{j}=0$. Thus, $\beta=0$. Let $i\neq j$, from $y_{i}=0$ we get
$x_{i,j}=0$. Thus, $B$ is a diagonal matrix. From $\alpha^{t}B=\alpha^{t}$ and all entries of $\alpha$
are not equal to $0$, we get $B=I_{n-1}$. Therefore, $\Stab_{P_{n}}(f'_{n})=1$.
\end{example}

Remark:  After we finished this article, Prof. Ra\"{i}s informed us that some results in this sub-section were also obtained in his article \cite{Rais}. Especially in his article, the existence of open $P$-coadjoint orbit was established, and a representative element of the open $P$-orbit was given. However, our method is different from his, and our results are more explicit. Moreover, our method is useful for us to interpret Sahi's result in the framework of Duflo's orbit method in next section (see Section \ref{SS: Duflo-Sahi}).

\subsection{The moment map in the $\GL(n,\mathbb{C})$ case}\label{SS:GL(C)}

Let $G=\GL(n,\mathbb{C})$, and $$P=P_{n}(\mathbb{C})=\{\left(\begin{array}{cc}A&\alpha\\0_{1\times(n-1)}
&1\\\end{array}\right):A\in\GL(n-1,\mathbb{C}),\alpha\in\mathbb{C}^{n-1}\}.$$ Let $T$ be the maximal
torus of $G$ consisting of all diagonal matrices in $G$. Let $\vec{a}=(a_1,..., a_n)\in\mathbb{C}^{n}$
with $a_{i}\neq a_{j}$ ($1\leq i<j\leq n$). Write $$\xi=\xi_{\vec{a}}=\diag\{a_1,..., a_n\}\in\mathfrak{t}
\subset\mathfrak{g},$$ and $$f=f_{\vec{a}}=\pr(\xi)\in\mathfrak{g}^{\ast}.$$ Then $f$ is a regular
semisimple element in $\mathfrak{g}^{\ast}$, and any regular semisimple orbit $\mathcal{O}\subset
\mathfrak{g}^{\ast}$ is of the form $\mathcal{O}=\mathcal{O}_{f}=G\cdot f$ with $f=f_{\vec{a}}$ as
above.

From now on, we fix $\vec{a}$, $\xi$ and $f$ in this subsection.

\begin{lemma}\label{L:double1}
There are exactly $2^{n}\!-\!1$ $P$ orbits in $\mathcal{O}_{f}$. One of them is Zariski open and dense,
and the union of the rest is a codimension one Zariski closed subset.
\end {lemma}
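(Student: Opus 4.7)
The plan is to exploit the identification $\mathcal{O}_{f}\cong G/T$, valid because $f=f_{\vec{a}}=\pr(\xi)$ is regular semisimple with pairwise distinct eigenvalues, so $\Stab_{G}(f)$ equals the centralizer of $\xi=\diag(a_{1},\dots,a_{n})$, namely $T$; and thereby to translate the enumeration of $P$-orbits on $\mathcal{O}_{f}$ into a question about $T$-orbits on the homogeneous space $P\backslash G$.

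Under the bijection $\mathcal{O}_{f}\cong G/T$, $g\cdot f\mapsto gT$, each $P$-orbit on $\mathcal{O}_{f}$ corresponds to a $(P,T)$-double coset $PgT\subset G$, equivalently to a $T$-orbit on $P\backslash G$ for the right translation action. Next I would observe that $P$ is exactly the stabilizer of the row vector $e_{n}=(0,\dots,0,1)$ under the right action of $G$ on $\mathbb{C}^{n}$; hence the map $g\mapsto e_{n}g$ (``last row of $g$'') induces a $G$-equivariant isomorphism $P\backslash G\cong\mathbb{C}^{n}\setminus\{0\}$. Under this identification the right $T$-translation action becomes the coordinatewise diagonal action $\diag(t_{1},\dots,t_{n})\cdot(v_{1},\dots,v_{n})=(v_{1}t_{1},\dots,v_{n}t_{n})$. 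The $T$-orbits on $\mathbb{C}^{n}\setminus\{0\}$ are therefore classified by their support: for each nonempty subset $S\subset\{1,\dots,n\}$ there is precisely one orbit, namely $\{v:v_{i}\neq 0\Leftrightarrow i\in S\}$, yielding $2^{n}-1$ orbits in all.

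To conclude, the orbit corresponding to $S=\{1,\dots,n\}$ equals $(\mathbb{C}^{\ast})^{n}$, which is Zariski open and dense in $\mathbb{C}^{n}\setminus\{0\}$ with complement $\bigl(\bigcup_{i=1}^{n}\{v_{i}=0\}\bigr)\setminus\{0\}$, a codimension one Zariski closed subset. Pulling this partition back along the smooth principal $P$-bundle $G\to P\backslash G$ and then pushing it down along the smooth $T$-quotient $G\to G/T\cong\mathcal{O}_{f}$ produces the corresponding decomposition of $\mathcal{O}_{f}$, with the claimed open dense $P$-orbit and codimension one closed complement. The argument is essentially formal and I foresee no serious obstacle; the only point requiring a little care is that density and codimension are preserved through the two smooth submersions above, which is standard.
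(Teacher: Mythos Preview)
Your proposal is correct and follows essentially the same approach as the paper: reduce $P$-orbits on $\mathcal{O}_{f}\cong G/T$ to $(P,T)$-double cosets, identify $P\backslash G$ (or equivalently $G/P$) with $\mathbb{C}^{n}\setminus\{0\}$ via the last row (respectively, via the action $g\cdot v=(g^{-1})^{t}v$ on $v_{0}=(0,\dots,0,1)^{t}$), and classify the diagonal $T$-orbits by their support. The only cosmetic difference is that the paper passes through the inversion $PgT\mapsto Tg^{-1}P$ to work with $T\backslash G/P$ and column vectors, whereas you work directly with the right $T$-action on $P\backslash G$ and row vectors; the content is identical.
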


\begin{proof}
With the assumption, $\Stab_{G}(f)=T$. Thus, $P\backslash\mathcal{O}_{f}=P\backslash G/T$. It is clear
that the map $$PgT\mapsto Tg^{-1}P$$ gives a bijection $P\backslash G/T\cong T\backslash G/P.$ Consider
the transitive $G$-action on $\mathbb{C}^{n}\!-\!\{0\}$, $$g\cdot(x_1,..., x_n)^{t}=(g^{-1})^{t}(x_1,
\dots,x_n)^{t},\ \forall g\in G,\forall(x_1,\dots,x_{n})\in\mathbb{C}^n\!-\!\{0\}.$$ Let $v_0=(0,...,
0,1)^{t}$. Then, $\Stab_{G}(v_0)=P$. Thus, $G/P=\mathbb{C}^n-\{0\}$ and $T\backslash G/P=
T\backslash(\mathbb{C}^n-\{0\})$.

For any $I=\{i_1,...,i_k\}$ where $1\leq i_1<i_2<...<i_k\leq n$ and $1\leq k\leq n$, let $v_{I}=(x_1,
\dots,x_{n})^{t}\in\mathbb{C}^n-\{0\}$ be defined by $x_{i}=1$ if $i\in\{i_1,\dots,i_{k}\}$, and $x_{i}=0$
if $i\not\in\{i_1,\dots,i_{k}\}$. Note that $T$ acts on $\mathbb{C}^{n}-\{0\}$ through $$\diag\{\lambda_1,
\dots,\lambda_n\}\cdot(x_1,..., x_n)^{t}=(\lambda_1^{-1}x_1,..., \lambda_n^{-1}x_n)^{t}.$$ It is easy
to see that $\{v_{I}:\emptyset\neq I\subset\{1,...,n\}\}$ represent all different $T$ orbits in
$\mathbb{C}^{n}-\{0\}$. This shows the lemma.
\end{proof}

Write $I_0=\{1,2,...,n\}$. For any $\emptyset\neq I\subset I_0$, introduce a matrix $g_{I}\in G$. In the
case of $n\in I$, define $g_{I}$ by $g_{I}=\left(\begin{array}{cc}I_{n-1}&0_{(n-1)\times 1}\\\beta^{t}&
0\\\end{array}\right)$, where $\beta^{t}=(x_1,\dots,x_{n-1})$ with $x_{i}=1$ if $i\in I$, and $x_{i}=1$
if $i\not\in I$. In the case of $n\not\in I$, let $k\!=\!\max\{1\leq\!i\leq\!n\!-\!1\!:\!i\in I\}.$ Define
$g_{I}$ by $$g_{I}=\left(\begin{array}{cccc}I_{k-1}&0_{(k-1)\times 1}&0_{(k-1)\times(n-1-k)}&0_{(k-1)
\times 1}\\0_{1\times(k-1)} &0&0_{1\times(n-1-k)}&1\\0_{(n\!-\!1\!-\!k)\times(k-1)}&0_{(n\!-\!1\!-\!k)
\times 1}&I_{n\!-\!1\!-\!k}&0_{(n\!-\!1\!-\!k)\times 1}\\\beta'^{t}&1&0_{1\times(n-1-k)}&0\\\end{array}
\right),$$ where $\beta'^{t}=(x_1,\dots,x_{k-1})$ with $x_{i}=1$ if $i\in I$, and $x_{i}=0$ if $i\not
\in I$.

\begin{proposition}\label{P:double2}
$\{g_{I}\cdot f:\emptyset\neq I\subset I_0\}$ represent all different $P$ orbits in $\mathcal{O}_{f}$.
Among these orbits, $Pg_{I_{0}}\cdot f$ is a Zarisk open and dense $P$ orbit and its complement is a
Zariski closed subset of codimension one.
\end{proposition}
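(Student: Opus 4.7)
My plan is to use the bijection established in the proof of Lemma~\ref{L:double1}:
\[
P\backslash\mathcal{O}_{f}\;\cong\;P\backslash G/T\;\xrightarrow{PgT\mapsto Tg^{-1}P}\;T\backslash G/P\;\cong\;T\backslash(\mathbb{C}^{n}-\{0\}),
\]
where the last identification uses $hP\mapsto h\cdot v_{0}=(h^{-1})^{t}v_{0}$. Composing these maps shows that $Pg\cdot f\in P\backslash\mathcal{O}_{f}$ corresponds to the $T$-orbit of $g^{t}v_{0}$. Since Lemma~\ref{L:double1} already supplies the $v_{I}$ ($\emptyset\neq I\subset I_{0}$) as representatives of all $2^{n}-1$ distinct $T$-orbits in $\mathbb{C}^{n}-\{0\}$, the first assertion of the proposition reduces to the identity $g_{I}^{t}v_{0}=v_{I}$ for each nonempty $I\subset I_{0}$.

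This identity I expect to be immediate from the definitions: because $v_{0}$ is the $n$-th standard basis vector, $g_{I}^{t}v_{0}$ is simply the last row of $g_{I}$ written as a column. In both cases ($n\in I$, and $n\notin I$ with $k=\max I$), the matrix $g_{I}$ has been arranged so that its last row carries a $1$ in each coordinate $i\in I$ and a $0$ in each coordinate $i\notin I$; reading off these last rows should produce $v_{I}$ on the nose.

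For the openness statement, I plan to identify $Pg_{I_{0}}\cdot f$ with the open and dense $T$-orbit $T\cdot v_{I_{0}}=(\mathbb{C}^{\ast})^{n}$ in $\mathbb{C}^{n}-\{0\}$, and confirm it by a direct stabilizer computation. Since $\Stab_{G}(f)=T$, conjugation by $g_{I_{0}}^{-1}$ gives
\[
\Stab_{P}(g_{I_{0}}\cdot f)=P\cap g_{I_{0}}Tg_{I_{0}}^{-1}\;\cong\;T\cap g_{I_{0}}^{-1}Pg_{I_{0}}=\Stab_{T}(g_{I_{0}}^{t}v_{0})=\Stab_{T}(v_{I_{0}})=\{1\},
\]
using that the only element of $T$ fixing $(1,\dots,1)^{t}$ under the diagonal action is the identity. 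Hence $Pg_{I_{0}}\cdot f$ has maximal dimension $n^{2}-n=\dim\mathcal{O}_{f}$, so it is the unique Zariski open and dense $P$-orbit in $\mathcal{O}_{f}$; the codimension-one statement for its complement then follows at once from Lemma~\ref{L:double1}.

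The only delicate point is bookkeeping of the bijection $PgT\leftrightarrow T\cdot g^{t}v_{0}$: the appearance of $g^{t}$ (rather than $g$ or $g^{-1}$) comes from composing $PgT\mapsto Tg^{-1}P$ with the left $G$-action $g\cdot w=(g^{-1})^{t}w$ on $\mathbb{C}^{n}-\{0\}$, and a missed transpose or inverse would send each $g_{I}$ to the wrong $T$-orbit. Once this is set up correctly, the matrix check is essentially a tautology, since the $g_{I}$ are designed precisely so that their last rows read off the vectors $v_{I}$.
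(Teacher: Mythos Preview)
Your proposal is correct and follows essentially the same route as the paper: the paper's proof shows $g_{I}^{-1}\cdot v_{0}=v_{I}$ (which, unpacking the action $h\cdot w=(h^{-1})^{t}w$, is exactly your identity $g_{I}^{t}v_{0}=v_{I}$ obtained by reading off the last row of $g_{I}$) and then invokes Lemma~\ref{L:double1}. The only cosmetic difference is that you supplement the openness of $Pg_{I_{0}}\cdot f$ with an explicit stabilizer computation, whereas the paper simply transfers the openness of $T\cdot v_{I_{0}}=(\mathbb{C}^{\ast})^{n}$ across the bijection; both are valid, and indeed the stabilizer computation you give appears in the paper immediately after Theorem~\ref{T:moment1}.
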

\begin{proof}
One can show that $g_{I}^{-1}\cdot v_0=v_{I}$ for any $I$. By the proof of Lemma \ref{L:double1}, $\{g_{I}
\cdot f:\emptyset\neq I\subset I_0\}$ represent all different $P$ orbits in $\mathcal{O}_{f}$.

It is clear that $Tg_{I_{0}}^{-1}\cdot v_0$ is a Zarisk open and dense subset in $\mathbb{C}^{n}-\{0\}$,
and its complement is a Zariski closed subset of codimension one. Thus, $Pg_{I_{0}}\cdot f$ is a Zarisk
open and dense $P$ orbit and its complement is a Zariski closed subset of codimension one.
\end{proof}

\begin{lemma}\label{L:moment1}
For any $\emptyset\neq I\subset I_0$, the element $p(g_{I}\cdot f)$ is semisimple, its depth is equal to 
$\#I$ and its eigenvalues are $\{a_{i}: i\in I_{0}-I\}$.
\end{lemma}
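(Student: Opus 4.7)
I proceed by induction on $n$; the case $n=1$ is trivial since $\mathfrak{p}_1(\mathbb{C})^{\ast}=0$. Fix $\emptyset\neq I\subset I_0$. In the case $\#I=1$, say $I=\{i_0\}$, a direct inspection of the defining formulas shows that $g_I$ is either the identity (when $i_0=n$) or the transposition matrix swapping positions $i_0$ and $n$ (when $i_0<n$). Either way, $\xi_I:=g_I\xi g_I^{-1}$ is diagonal, so the first $n-1$ entries of its bottom row vanish. Hence $p(g_I\cdot f)=\pr'(\xi_I)$ vanishes on $\mathfrak{n}_n(\mathbb{C})$, lies in $\mathfrak{l}_n(\mathbb{C})^{\ast}$, and is identified via the trace pairing with a diagonal $(n-1)\times(n-1)$ matrix whose entries are $\{a_i:i\neq i_0\}$. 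This is a semisimple orbit of depth $1=\#I$ with the required eigenvalues.

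Now suppose $\#I\geq 2$. First I reduce the subcase $n\notin I$ to the subcase $n\in I$: setting $k=\max I$, inspection of the formula for $g_I$ gives a factorization $g_I=g_{I'}\sigma_{k,n}$, where $\sigma_{k,n}$ is the transposition matrix for positions $k$ and $n$ and $I':=(I\setminus\{k\})\cup\{n\}$ satisfies $n\in I'$ and $\#I'=\#I$. Writing $\xi':=\sigma_{k,n}\xi\sigma_{k,n}^{-1}=\diag(a'_1,\ldots,a'_n)$ (the diagonal matrix obtained from $\xi$ by swapping the $k$-th and $n$-th entries), one has $\pr(\xi_I)=g_{I'}\cdot\pr(\xi')$ and $\{a'_i:i\in I_0\setminus I'\}=\{a_i:i\in I_0\setminus I\}$, so the conclusion for $(I',\xi')$ implies the conclusion for $(I,\xi)$. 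Assuming therefore $n\in I$, a direct matrix computation using $g_I=\left(\begin{smallmatrix}I_{n-1}&0\\\beta^T&1\end{smallmatrix}\right)$ yields
\[\xi_I=\begin{pmatrix}A&0\\\gamma^T&a_n\end{pmatrix},\qquad A=\diag(a_1,\ldots,a_{n-1}),\quad \gamma_i=(a_i-a_n)\cdot\mathbf{1}_{i\in I\setminus\{n\}}.\]
Since $I\setminus\{n\}\neq\emptyset$ and the $a_i$ are distinct, the projection $h$ of $p(g_I\cdot f)$ to $\mathfrak{n}_n(\mathbb{C})^{\ast}$ (represented by $\gamma$) is nonzero. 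By the proof of Lemma \ref{L:Porbit-inductive}, the $P_n(\mathbb{C})$-orbit of $p(g_I\cdot f)$ reduces to a $P_{n-1}(\mathbb{C})$-orbit in $(\mathfrak{l}_n(\mathbb{C})^h)^{\ast}\cong\mathfrak{p}_{n-1}(\mathbb{C})^{\ast}$, with depth exactly one more than that of the reduced orbit.

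To identify the reduced orbit, take $\overline h=\pr'\!\left(\begin{smallmatrix}0&0\\\gamma^T&0\end{smallmatrix}\right)$; then $g:=p(g_I\cdot f)-\overline h$ lies in $\mathfrak{l}_n(\mathbb{C})^{\ast}$ and corresponds to $A$ under the trace identification $\mathfrak{l}_n(\mathbb{C})^{\ast}\cong\mathfrak{gl}(n-1,\mathbb{C})^{\ast}$. By an explicit basis change of $\GL(n-1,\mathbb{C})=L_n(\mathbb{C})$ that reindexes $\{1,\ldots,n-1\}$ so as to carry the support $I\setminus\{n\}$ of $h$ onto an appropriate ``rightmost block'', I identify $L_n(\mathbb{C})^h\cong P_{n-1}(\mathbb{C})$ and the class $[g]\in\mathfrak{p}_{n-1}(\mathbb{C})^{\ast}$ with $p_{n-1}(g_{I\setminus\{n\}}\cdot\pr(\diag(a_1,\ldots,a_{n-1})))$ up to $P_{n-1}(\mathbb{C})$-conjugacy. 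The inductive hypothesis, applied to this $P_{n-1}(\mathbb{C})$-orbit, yields depth $\#I-1$ and eigenvalues $\{a_i:i\in\{1,\ldots,n-1\}\setminus(I\setminus\{n\})\}=\{a_i:i\notin I\}$; adding one for the reduction step gives depth $\#I$ and the stated eigenvalues for $p(g_I\cdot f)$. The main obstacle is precisely this last identification: making the isomorphism $L_n(\mathbb{C})^h\cong P_{n-1}(\mathbb{C})$ explicit (since it depends on the support of $h$) and tracking how the trace pairing with $A$ descends through the quotient $\mathfrak{l}_n(\mathbb{C})^{\ast}/\ad(\mathfrak{n}_n(\mathbb{C}))\overline h$ to match the explicit formula for $p_{n-1}(g_{I\setminus\{n\}}\cdot\pr(\diag(a_1,\ldots,a_{n-1})))$ in the $(n-1)$-setting.
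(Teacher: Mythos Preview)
Your inductive strategy is sound and in spirit matches the paper's reduction, but the step you correctly flag as the ``main obstacle'' is a genuine gap, and a mere permutation of $\{1,\dots,n-1\}$ does not resolve it. After reindexing so that $I\setminus\{n\}$ occupies the last $m=\#I-1$ positions, the vector $\gamma$ still has $m$ nonzero entries $(a_{i_1}-a_n,\dots,a_{i_m}-a_n)$, so $L_n(\mathbb{C})^h$ is \emph{not} the standard copy of $P_{n-1}(\mathbb{C})$; you must further conjugate by an element of $\GL(m,\mathbb{C})$ to bring $\gamma$ to $e_{n-1}$, and then carefully track what this does to the diagonal block $A$ before restricting to $\mathfrak{p}_{n-1}$. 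Your asserted match with $p_{n-1}\!\bigl(g_{I\setminus\{n\}}\cdot\pr(\diag(a_1,\dots,a_{n-1}))\bigr)$ is in fact true, but the proposal does not supply the computation that shows it.

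The paper sidesteps this bookkeeping by reversing the order of operations: it first observes that after the permutation the matrix $\xi_I$ is block-diagonal, with the $(I_0\!-\!I)$-block equal to $\diag(a_i:i\notin I)$ and the $I$-block of size $\#I$ exactly in the form of Example~\ref{E:openOrbit2} (distinct diagonal entries, all $b_j\neq 0$). Since that example shows this form lies in the open $P_{\#I}$-orbit, the $I$-block can be replaced, within its $P_{\#I}$-orbit, by the nilpotent companion form $\xi_{\#I}$ of Example~\ref{E:openOrbit1}. Once this normalization is done, the $\#I-1$ reduction steps of Theorem~\ref{T:Porbit} each peel off one subdiagonal entry of $\xi_{\#I}$ and leave the diagonal $(I_0\!-\!I)$-block untouched, so the depth and the surviving eigenvalues $\{a_i:i\notin I\}$ are read off directly. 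In short: you reduce once and then try to normalize in $\mathfrak{p}_{n-1}^\ast$; the paper normalizes first (using Examples~\ref{E:openOrbit1} and~\ref{E:openOrbit2}) and then all reductions become transparent. Inserting that block-diagonal observation and the appeal to Example~\ref{E:openOrbit2} into your argument would close the gap.
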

\begin{proof}
We have $$p(g_{I}\cdot f)=p(g_{I}\cdot\pr(\xi))=p(\pr(g_{I}\cdot\xi))=\pr'(g_{I}\cdot\xi).$$

In the case of $n\in I$, by calculation we have $\pr'(g_{I}\cdot\xi)=\pr'(\xi_{I})$, where $$\xi_{I}=  \left(\begin{array}{cc}\diag\{a_1,\dots,a_{n-1}\}&0_{(n-1)\times 1}\\\beta^{t}&0\end{array}\right))$$ 
with $\beta^{t}=(x_1,\dots,x_{n-1})$, $x_{i}=a_{i}-a_{n}$ if $i\in I$, and $x_{i}=0$ if $i\not\in I$. 
Separating $I_0=\{1,2,\dots,n\}$ into the disjoint union of two subsets, $I$ and $I_0-I$, we see that 
$\xi_{I}$ is a block diagonal matrix. The one with rows and columns indexed by $I$ is of the form in 
Example \ref{E:openOrbit2}; the one with rows and columns indexed by $I_0-I$ is a diagonal matrix with 
eigenvalues $\{a_{i}: i\in I_{0}-I\}$.

We know that there is a unique open $P$ orbit in $\mathfrak{p}^{\ast}$, and any matrix of the form in
Example \ref{E:openOrbit1} or \ref{E:openOrbit2} is in this orbit. From this, substituting $\xi_{I}$
by a $P$ conjugate matrix $\xi'_{I}$, we could make $\xi'_{I}$ still a block diagonal matrix with two
blocks indexed by $I$ and $I_0-I$ respectively, with the part indexed by the set $I$ of the form in
Example \ref{E:openOrbit1} (with degree of $\#I$, instead of $n$), and the part indexed by $I_0-I$
a digonal matrix with eigenvalues $\{a_{i}: i\in I_{0}-I\}$. Applying the reduction \ref{T:Porbit}
$\#I-1$ times, we arrive at a diagonal matrix in $\mathfrak{l}_{n\!+\!1\!-\!\#I}(\mathbb{C})$ with 
eigenvalues $\{a_{i}: i\in I_{0}-I\}$. That just means, $g_{I}\cdot f$ is semisimple, with depth 
equal to $\#I$, and its eigenvalues are $\{a_{i}: i\in I_{0}-I\}$. 

In the case of $n\in I$, let $k=\max\{i: i\in I\}$. By calculation we have $g_{I}\cdot f=
\pr'(g_{I}\cdot\xi)=\pr'(\xi_{I})$, where $$\xi_{I}=\left(\begin{array}{cc}\diag\{a_1,\dots,a_{k-1},
a_{n},a_{k+1},\dots,a_{n-1}\}&0_{(n-1)\times 1}\\\beta^{t}&0\end{array}\right))$$ with $\beta^{t}=
(x_1,\dots,x_{k-1},0,\dots,0)$, where $x_{i}=a_{i}-a_{n}$ if $i\in I$, and $x_{i}=0$ if $i\not\in I$.
Write $$I'=(I-\{k\})\cup\{n\}.$$ Separating $I_0=\{1,2,\dots,n\}$ into the disjoint union of two
subsets, $I'$ and $I_0-I'$, we see that $\xi_{I}$ is a block diagonal matrix. The one with rows and
columns indexed by $I'$ is of the form in Example \ref{E:openOrbit2}; the one with rows and columns
indexed by $I_0-I'$ is a diagonal matrix with eigenvalues $\{a_{i}: i\in I_{0}-I\}$. The proof of
the rest is the same in the case of $n\in I$.
\end{proof}

\begin{theorem}\label{T:moment1}
The set $p(\mathcal{O}_{f})$ consists of exactly $2^{n}-1$ semisimple $P$ orbits, with the unique open $P$ 
coadjoint orbit in $\mathfrak{p}^{\ast}$ among them. Moreover,  we have:
\begin{itemize}
\item[(1),] the moment map $p:\mathcal{O}_{f}\rightarrow\mathfrak{p}^{\ast}$ is \emph{weakly proper} .
\item[(2),] the \emph{reduced space} of the unique open $P$ orbit in $\mathfrak{p}^{\ast}$ with respect to
the moment map $p$ is a single point.
\end{itemize}
\end{theorem}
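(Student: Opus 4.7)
The plan is to read off the statement from the $P$-orbit decomposition of $\mathcal{O}_{f}$ established in Proposition~\ref{P:double2}, combined with the moment-map computation in Lemma~\ref{L:moment1}, and to exploit the uniqueness of the open $P$-coadjoint orbit from Proposition~\ref{P:openOrbit}.

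First I would verify the counting statement. Because $p$ is $P$-equivariant and $\mathcal{O}_{f}=\bigsqcup_{\emptyset\neq I\subset I_{0}}P\cdot g_{I}\cdot f$ by Proposition~\ref{P:double2}, the image decomposes as $p(\mathcal{O}_{f})=\bigcup_{\emptyset\neq I\subset I_{0}}P\cdot p(g_{I}\cdot f)$. Lemma~\ref{L:moment1} identifies each $P\cdot p(g_{I}\cdot f)$ as a semisimple $P$-orbit of depth $\#I$ with eigenvalue set $\{a_{i}:i\in I_{0}-I\}$; since the $a_{i}$ are pairwise distinct, the pair (depth, eigenvalue set) recovers $I$, and semisimple $P$-orbits are in turn classified by this pair through Theorem~\ref{T:Porbit}. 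Hence these $2^{n}-1$ image orbits are pairwise distinct. The one indexed by $I=I_{0}$ has maximal depth $n$ and so, by Proposition~\ref{P:openOrbit}, coincides with the unique open $P$-coadjoint orbit $\mathcal{O}_{0}$.

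For part~(1), Proposition~\ref{P:openOrbit} identifies the strongly regular locus $\Upsilon_{sr}\subset\mathfrak{p}^{*}$ with $\mathcal{O}_{0}$; among the orbits above, only the one with $I=I_{0}$ lies in $\mathcal{O}_{0}$, so $p^{-1}(\mathcal{O}_{0})\cap\mathcal{O}_{f}=P\cdot g_{I_{0}}\cdot f$. The restriction $p\colon P\cdot g_{I_{0}}\cdot f\to\mathcal{O}_{0}$ is a $P$-equivariant map between two $P$-homogeneous spaces, and $P$-equivariance forces $\Stab_{P}(g_{I_{0}}\cdot f)\subset\Stab_{P}(p(g_{I_{0}}\cdot f))=\{1\}$. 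Both stabilizers are therefore trivial, so $p$ restricts to a $P$-equivariant bijection, hence a homeomorphism; preimages of compact subsets of $\mathcal{O}_{0}$ are then compact, which is weak properness. For part~(2), fix $f_{0}\in\mathcal{O}_{0}$; the symplectic reduced space of $\mathcal{O}_{0}$ with respect to $p$ is $(p^{-1}(f_{0})\cap\mathcal{O}_{f})/\Stab_{P}(f_{0})$, and by the same bijection, together with $\Stab_{P}(f_{0})=\{1\}$, this is a single point.

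The only place real care is needed is in asserting that the $2^{n}-1$ image $P$-orbits are distinct; this I would control by the invariants (depth, eigenvalue set) coming from Lemma~\ref{L:moment1}. Once this is in hand, both (1) and (2) reduce to the tautological inclusion $\Stab_{P}(\eta)\subset\Stab_{P}(p(\eta))$, which collapses to an equality of trivial groups via Proposition~\ref{P:openOrbit}. I do not anticipate a substantive obstacle, since the combinatorial and orbit-theoretic work has already been carried out in the preceding subsection.
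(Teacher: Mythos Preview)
Your proposal is correct and follows essentially the same approach as the paper: both combine the $P$-orbit decomposition of $\mathcal{O}_{f}$ (Lemma~\ref{L:double1}/Proposition~\ref{P:double2}) with the explicit moment-map images (Lemma~\ref{L:moment1}), using the distinctness of the $a_{i}$ to separate the $2^{n}-1$ image orbits and identifying the $I=I_{0}$ orbit with $\mathcal{O}_{0}$. You spell out the weak properness and reduced-space arguments in more detail than the paper (which simply says these follow from the description of the moment map), but the substance is identical.
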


\begin{proof}
By Lemma \ref{L:double1}, $\mathcal{O}_{f}$ is the union of exactly $2^{n}-1$ $P$ orbits. In Lemma
\ref{L:moment1}, we described the image of the moment map $p:\mathfrak{g}^{\ast}\rightarrow
\mathfrak{p}^{\ast}$ for each of these $P$ orbits. Particularly, we see that: each of them is semisimple,
and different $P$ orbits in $\mathcal{O}_{f}$ are mapped to different $P$ orbits in $\mathfrak{p}^{\ast}$
(due to $a_{i}\neq a_{j}$), and the unique open $P$ orbit in $\mathfrak{p}^{\ast}$ is among them (for
$I=I_0$). This shows the first statement of the theorem.

The second statement also follows from the description of the moment map.
\end{proof}

In Theorem \ref{T:moment1}, for any $\emptyset\neq I\subset I_0$, write $j=\#I-1$. Then, $$\Stab_{P}(g_{I}
\cdot f)=P\cap g_{I}Tg_{I}^{-1}=g_{I}(T\cap g_{I}^{-1}Pg_{I})g_{I}^{-1},$$ and $$T\cap g_{I}Pg_{I}^{-1}=
\Stab_{T}(g_{I}^{-1}\cdot v_0),$$ which is a torus isomorphic to $(\mathbb{C}^{\times})^{n-1-j}$. From 
the description of $p(g_{I}\cdot f)$ in Lemma \ref{L:moment1}, by Lemma \ref{L:Porbit-stabilizer} and 
Lemma \ref{L:Porbit-stabilizer2} we have $$\Stab_{P}(p(g_{I}\cdot f))\cong\mathbb{C}^{n-1-j}\rtimes
(\mathbb{C}^{\times})^{n-1-j}.$$

\subsection{The $\GL(n,\mathbb{R})$ case}\label{SS:GL(R)}

Now let $G=\GL(n,\mathbb{R})$, and $$P=P_{n}(\mathbb{R})=\{\left(\begin{array}{cc}A&\alpha\\
0_{1\times(n-1)}&1\\\end{array}\right):A\in\GL(n-1,\mathbb{R}),\alpha\in\mathbb{R}^{n-1}\}.$$
Let $0\leq k\leq\frac{n}{2}$. Let $z_{1},\dots,z_{n}$ be $n$ distinct complex numbers with
$z_{2j-1}=a_{j}+\mathbf{i}b_{j}$ ($1\leq j\leq k$), $z_{2j}=a_{j}-\mathbf{i}b_{j}$ ($1\leq j
\leq k$), and $z_{2k+j}=a_{2k+j}$ ($1\leq j\leq n-2k$), where $a_1,\dots,a_{k},a_{2k+1},\dots,
a_{n},b_1,\dots,b_{k}\in\mathbb{R}$ and $b_1\cdots b_{k}\neq 0$. Write $$\vec{z}=(z_1,...,z_{n})
\in\mathbb{C}^{n}.$$ Set $$J=\left(\begin{array}{cc}0&1\\-1&0\\\end{array}\right).$$ Write $$\xi
=\xi_{\vec{z}}=\diag\{a_1I_{2}+b_1J,\dots,a_{k}I_{2}+b_{k}J,a_{2k+1},\dots,a_{n}\}\in\mathfrak{g},$$
and $$f=f_{\vec{z}}=\pr(\xi)\in\mathfrak{g}^{\ast}.$$ Write \begin{eqnarray*}T_{k}&=&
\{\diag\{\lambda_1I_{2}\!+\!\mu_1J,\dots,\lambda_{k}I_{2}\!+\!\mu_{k}J,\lambda_{2k\!+\!1},\dots,
\lambda_{n}\}:\\&&\lambda_{1},\dots,\lambda_{k},\lambda_{2k\!+\!1},\dots,\lambda_{n},\mu_1,\dots,
\mu_{k}\in\mathbb{R},\\&&(\lambda_{1}^2\!+\!\mu_{1}^{2})\cdots(\lambda_{k}^{2}\!+\!\mu_{k}^{2})
\lambda_{2k\!+\!1}\cdots\lambda_{n}\neq 0\},\end{eqnarray*} which is a maximal torus in $G$.
It is clear that $\Stab_{G}(f)=T_{k}$. Thus, $f$ is a regular semisimple element in
$\mathfrak{g}^{\ast}$. Any regular semisimple orbit $\mathcal{O}\subset\mathfrak{g}^{\ast}$ is
of the form $\mathcal{O}=\mathcal{O}_{f}=G\cdot f$ for some $0\leq k\leq\frac{n}{2}$, $a_1,\dots,
a_{k},a_{2k+1},\dots,a_{n},b_1,\dots,b_{k}\in\mathbb{R}$ with $b_1\cdots b_{k}\neq 0$, and
$f=f_{\vec{z}}$ as above. From now on, we fix $\vec{z}$, $\xi$ and $f$ in this subsection.

\begin{lemma}\label{L:double3}
There are exactly $2^{n-k}\!-\!1$ $P$ orbits in $\mathcal{O}_{f}$. One of them is open and dense,
and the union of the rest is a codimension one (if $k<\frac{n}{2}$) or two (if $k=\frac{n}{2}$)
closed subset.
\end {lemma}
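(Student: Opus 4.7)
The strategy mirrors the argument used for Lemma \ref{L:double1}, but the $T_k$-action now mixes one-dimensional (real) and two-dimensional (complex-conjugate pair) blocks, which is what ultimately accounts for the $2^{n-k}-1$ count and the codimension dichotomy.

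The plan is to first reduce the enumeration of $P$-orbits in $\mathcal{O}_{f}$ to an orbit count on a much simpler space. Since $\Stab_{G}(f)=T_{k}$, the orbit set is $P\backslash\mathcal{O}_{f}=P\backslash G/T_{k}$, and the involution $PgT_{k}\mapsto T_{k}g^{-1}P$ identifies this with $T_{k}\backslash G/P$. As in the complex case, the transitive $G$-action on $\mathbb{R}^{n}-\{0\}$ by $g\cdot v=(g^{-1})^{t}v$ has stabilizer $P$ at $v_{0}=(0,\dots,0,1)^{t}$, so $G/P\cong\mathbb{R}^{n}-\{0\}$. Thus the $P$-orbits in $\mathcal{O}_{f}$ are in bijection with the $T_{k}$-orbits on $\mathbb{R}^{n}-\{0\}$.

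Next I would analyze the $T_{k}$-action on $\mathbb{R}^{n}-\{0\}$ block by block. Writing a vector as $v=(v^{(1)},\dots,v^{(k)},x_{2k+1},\dots,x_{n})$ with $v^{(j)}\in\mathbb{R}^{2}$, the action of the $j$-th complex block is by $(\lambda_{j}I_{2}+\mu_{j}J)^{-1}$, which on $\mathbb{R}^{2}\cong\mathbb{C}$ is multiplication by the nonzero complex scalar $(\lambda_{j}+\mathbf{i}\mu_{j})^{-1}$; so on each such pair the action is transitive on $\mathbb{R}^{2}-\{0\}$ and fixes $0$. On each remaining real coordinate, $T_{k}$ acts by a nonzero scalar, transitive on $\mathbb{R}-\{0\}$ and fixing $0$. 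Therefore a $T_{k}$-orbit is completely determined by specifying, for each of the $k$ complex blocks, whether $v^{(j)}=0$ or $v^{(j)}\neq 0$, and for each of the $n-2k$ real coordinates, whether $x_{2k+i}=0$ or not. Excluding the zero vector, this gives exactly $2^{k+(n-2k)}-1=2^{n-k}-1$ orbits.

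For the topological claim, I would identify the open orbit as the one where every block is nonzero and compute the codimension of its complement directly. Making a single real coordinate vanish produces a closed $T_{k}$-stable subvariety of real codimension $1$, while making a single complex pair vanish produces one of real codimension $2$. If $k<n/2$ there is at least one real coordinate available, so the complement of the open orbit is a finite union of such subvarieties and its (minimal) codimension is $1$; if $k=n/2$ every block is two-dimensional and the minimal codimension is $2$. The open orbit itself is open and dense because its preimage in $\mathbb{R}^{n}-\{0\}$ is the complement of a finite union of proper linear subspaces.

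The only step that requires any care is the block-by-block transitivity: one must verify that $(\lambda_{j}I_{2}+\mu_{j}J)^{-1}$ acting on $\mathbb{R}^{2}$ realizes multiplication by an arbitrary nonzero complex number, but this is immediate from the embedding $\mathbb{C}\hookrightarrow\mathfrak{gl}(2,\mathbb{R})$, $a+\mathbf{i}b\mapsto aI_{2}+bJ$. The remaining counting and codimension bookkeeping is routine.
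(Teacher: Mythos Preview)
Your proof is correct and follows essentially the same route as the paper: reduce $P\backslash\mathcal{O}_{f}$ to $T_{k}\backslash(\mathbb{R}^{n}-\{0\})$ via $G/P\cong\mathbb{R}^{n}-\{0\}$, then classify $T_{k}$-orbits block by block. One tiny slip: the matrix acting on the $j$-th $\mathbb{R}^{2}$-block is $((\lambda_{j}I_{2}+\mu_{j}J)^{-1})^{t}$, not $(\lambda_{j}I_{2}+\mu_{j}J)^{-1}$, because of the transpose in $g\cdot v=(g^{-1})^{t}v$; however, under the identification $\mathbb{R}^{2}\cong\mathbb{C}$ this still amounts to multiplication by a nonzero complex scalar, so your transitivity conclusion and the orbit count are unaffected.
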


\begin{proof}
Let $G$ act on $\mathbb{R}^{n}-\{0\}$ though $$g\cdot(x_1,\dots,x_{n})^{t}=(g^{-1})^{t}(x_1,\dots,
x_{n})^{t},\ \forall g\in G,\forall (x_1,\dots,x_{n})^{t}\in\mathbb{R}^{n}-\{0\}.$$ Write $v_0=
(0,\dots,0,1)^{t}\in\mathbb{R}^{n}-\{0\}$. Then, $\Stab_{G}(v_0)=P$ and $\mathbb{R}^{n}-\{0\}=G/P$.
Similar as in the proof of Lemma \ref{L:moment1}, we have identifications $$P\backslash\mathcal{O}_{f}
=P\backslash G/T_{k}\cong T_{k}\backslash G/P=T_{k}\backslash(\mathbb{R}^{n}-\{0\}).$$

Write $I_{0}^{(1)}=\{1,\dots,k\}$, $I_{0}^{(2)}=\{2k+1,\dots,n\}$. For any $I_1\subset I_{0}^{(1)}$
and $I_2\subset I_{0}^{(2)}$ with $\#I_{1}+\#I_{2}>0$, let $v_{I_1,I_2}=(x_1,\dots,x_{n})\in
\mathbb{R}^n-\{0\}$ be defined by $(x_{2i-1},x_{2i})=(0,1)$ if $i\in I_{1}$, $(x_{2i-1},x_{2i})=(0,0)$
if $i\in I_{0}^{(1)}-I_{1}$, $x_{i}=1$ if $i\in I_{2}$, and $x_{i}=0$ if $i\in I_{0}^{(2)}-I_{2}$.
Note that $T_{k}$ acts on $\mathbb{R}^{n}-\{0\}$ through \begin{eqnarray*}&&\diag\{\lambda_1I_{2}+
\mu_{1}J,\dots,\lambda_{k}I_{2}+\mu_{k}J,\lambda_{2k+1},\dots,\lambda_n\}\cdot(x_1,..., x_n)^{t}
\\&=&((\lambda_{1}^{2}\!+\!\mu_{1}^{2})^{-1}(x_1\lambda_1\!+\!y_{1}\mu_1),(\lambda_{1}^{2}\!+\!
\mu_{1}^{2})^{-1}(-x_1\mu_1\!+\!y_{1}\lambda_1),\dots,\\&&(\lambda_{k}^{2}\!+\!\mu_{k}^{2})^{-1}
(x_{k}\lambda_{k}\!+\!y_{k}\mu_{k}),(\lambda_{k}^{2}\!+\!\mu_{k}^{2})^{-1}(-x_{k}\mu_{k}\!+\!
y_{k}\lambda_{k}),\\&&\lambda_{2k\!+\!1}^{-1}x_{2k\!+\!1},\dots,\lambda_n^{-1}x_n)^{t}.
\end{eqnarray*} It is easy to see that $\{v_{I_1,I_2}:\emptyset\neq I\subset\{1,...,n\}\}$
represent all different $T_{k}$ orbits in $\mathbb{R}^{n}-\{0\}$. This shows the lemma.
\end{proof}

For $(I_1,I_2)$ as in the above proof, define a matrix $g_{I_1,I_2}\in G$. In the case of $n\in I_{2}$,
let $$g_{I_1,I_2}=\left(\begin{array}{cc}I_{n-1}&0_{(n-1)\times 1}\\\beta^{t}&1\\\end{array}\right),$$
where $\beta^{t}=(x_1,\dots,x_{n-1})$ with $(x_{2i-1},x_{2i})=(0,1)$ if $i\in I_1$, $(x_{2i-1},x_{2i})
=(0,0)$ if $i\in I_{0}^{(1)}-I_1$, $x_{i}=1$ if $i\in I_{2}$, and $x_{i}=0$ if $i\in I_{0}^{(2)}-I_2$.

In the case of $I_{2}\not=\emptyset$ and $n\in I_{0}^{(2)}-I_2$, set $k'\!=\!\max\{1\leq\!i\leq\!n\!-
\!1\!:\!i\in I_2\}.$ Let $$g_{I_1,I_2}=\left(\begin{array}{cccc}I_{k'-1}&0_{(k'-1)\times 1}&0_{(k'-1)
\times(n-1-k')}&0_{(k'-1)\times 1}\\0_{1\times(k'-1)} &0&0_{1\times(n-1-k')}&1\\0_{(n\!-\!1\!-\!k')
\times(k'-1)}&0_{(n\!-\!1\!-\!k')\times 1}&I_{n\!-\!1\!-\!k'}&0_{(n\!-\!1\!-\!k')\times 1}\\\beta'^{t}
&1&0_{1\times(n-1-k')}&0\\\end{array}\right),$$ where $\beta'^{t}=(x_1,\dots,x_{k'-1})$ with $(x_{2i-1},
x_{2i})=(0,1)$ if $i\in I_1$, $(x_{2i-1},x_{2i})=(0,0)$ if $i\in I_{0}^{(1)}-I_1$, $x_{i}=1$ if
$i\in I_{2}$, and $x_{i}=0$ if $i\not\in I_{0}^{(2)}-I_2$.

In the case of $I_{2}=\emptyset$ and $\frac{n}{2}\in I_1$, let $$g_{I_1,I_2}=\left(\begin{array}{cc}
I_{n-1}&0_{(n-1)\times 1}\\\beta^{t}&1\\\end{array}\right),$$ where $\beta^{t}=(x_1,\dots,x_{n-1})$
with $(x_{2i-1},x_{2i})=(0,1)$ if $i\in I_1$ ($1\leq i<\frac{n}{2}$), $(x_{2i-1},x_{2i})=(0,0)$ if
$i\in I_{0}^{(1)}-I_1$ ($1\leq i<\frac{n}{2}$), and $x_{n-1}=0$.

In the case of $I_{2}=\emptyset$ and $\frac{n}{2}\not\in I_1$, set $k'\!=\!\max\{1\leq\!i<\!
\frac{n}{2}\!:\!i\in I_1\}.$ Let $$g_{I_1,I_2}=\left(\begin{array}{cccc}I_{2k'-1}&0_{(2k'-1)\times 1}
&0_{(2k'-1)\times(n-1-2k')}&0_{(2k'-1)\times 1}\\0_{1\times(2k'-1)} &0&0_{1\times(n-1-2k')}&1\\
0_{(n\!-\!1\!-\!2k')\times(2k'-1)}&0_{(n\!-\!1\!-\!2k')\times 1}&I_{n\!-\!1\!-\!2k'}&0_{(n\!-
\!1\!-\!2k')\times 1}\\\beta'^{t}&1&0_{1\times(n-1-2k')}&0\\\end{array}\right),$$ where $\beta'^{t}
=(x_1,\dots,x_{2k'-1})$ with $(x_{2i-1},x_{2i})=(0,1)$ if $i\in I_1$, $(x_{2i-1},x_{2i})=(0,0)$ if
$i\in I_{0}^{(1)}-I_1$, and $x_{2k'-1}=0$.

\begin{proposition}\label{P:double4}
$\{g_{I_1,I_2}\cdot f:(\emptyset,\emptyset)\neq (I_1,I_2)\subset(I_{0}^{(1)},I_{0}^{(2)})\}$ represent
all different $P$ orbits in $\mathcal{O}_{f}$. Among these orbits, $Pg_{I_{0}^{(1)},I_{0}^{(2)}}
\cdot f$ is an open and dense $P$ orbit, and its complement is a closed subset of codimension one
(in the case of $k\neq\frac{n}{2}$) or two (in the case of $k=\frac{n}{2}$).
\end{proposition}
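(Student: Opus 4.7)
The plan is to follow the same template as the proof of Proposition \ref{P:double2} in the complex case, but with more careful case analysis to accommodate the four families defining $g_{I_1,I_2}$. The key bridge is again the identifications
$$P\backslash\mathcal{O}_f = P\backslash G/T_k \cong T_k\backslash G/P = T_k\backslash(\mathbb{R}^n-\{0\})$$
established during the proof of Lemma \ref{L:double3}, under which a double coset $PgT_k$ corresponds to the $T_k$-orbit of $g^{-1}\cdot v_0$ in $\mathbb{R}^n-\{0\}$.

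First I would verify, by direct matrix computation in each of the four cases in the definition of $g_{I_1,I_2}$, that
$$g_{I_1,I_2}^{-1}\cdot v_0 = v_{I_1,I_2}.$$
In the case $n\in I_2$ this amounts to noting that $g_{I_1,I_2}$ is lower-unitriangular so $(g_{I_1,I_2}^{-1})^t v_0$ reads off the last column of $(g_{I_1,I_2}^{-1})^t$, which is $v_{I_1,I_2}$. The three remaining cases use the auxiliary block permutation implicit in the definition of $g_{I_1,I_2}$ to move the nonzero coordinate from slot $n$ to slot $k'$ (or $2k'$), and the calculation is the same type of bookkeeping. Combined with Lemma \ref{L:double3}, which already says that the $v_{I_1,I_2}$ exhaust the $T_k$-orbits on $\mathbb{R}^n-\{0\}$, this immediately yields the first statement of the proposition.

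For the second statement, the point is that under the above identification $Pg_{I_0^{(1)},I_0^{(2)}}\cdot f$ corresponds to the $T_k$-orbit of $v_{I_0^{(1)},I_0^{(2)}}$, i.e.\ to the locus in $\mathbb{R}^n-\{0\}$ consisting of vectors whose every $T_k$-block is nonzero. Concretely, this locus is the complement of the union of the $k$ real codimension-two subspaces $\{x_{2i-1}=x_{2i}=0\}$ for $1\leq i\leq k$ and the $n-2k$ real codimension-one hyperplanes $\{x_j=0\}$ for $j\in I_0^{(2)}$. Thus $Pg_{I_0^{(1)},I_0^{(2)}}\cdot f$ is open and dense, and its complement is a finite union of real affine subspaces whose minimum codimension is one exactly when $n-2k>0$ (i.e.\ $k<\tfrac{n}{2}$) and is two when $I_0^{(2)}=\emptyset$ (i.e.\ $k=\tfrac{n}{2}$).

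The proof is essentially routine given the framework already set up for the complex case; the main obstacle is simply the notational burden of handling the four parity cases in the definition of $g_{I_1,I_2}$ uniformly. One has to be attentive in the case $I_2=\emptyset$, where the role of the real coordinate $n$ is played by the paired coordinate $n-1$, to make sure the formula $g_{I_1,I_2}^{-1}\cdot v_0=v_{I_1,I_2}$ still holds and that the codimension count in the complement remains correct (codimension two precisely when $k=\tfrac{n}{2}$).
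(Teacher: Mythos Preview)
Your proposal is correct and follows essentially the same route as the paper's proof: verify $g_{I_1,I_2}^{-1}\cdot v_0=v_{I_1,I_2}$ case by case, invoke the bijection $P\backslash\mathcal{O}_f\cong T_k\backslash(\mathbb{R}^n-\{0\})$ from Lemma~\ref{L:double3}, and read off the codimension of the complement of the big orbit directly on $\mathbb{R}^n-\{0\}$. One small slip: since the action is $h\cdot v=(h^{-1})^{t}v$, the quantity $g_{I_1,I_2}^{-1}\cdot v_0$ equals $g_{I_1,I_2}^{t}v_0$ (the last row of $g_{I_1,I_2}$), not $(g_{I_1,I_2}^{-1})^{t}v_0$ as you wrote; this does not affect the argument but would flip signs if taken literally.
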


\begin{proof}
One can show that $g_{I_1,I_2}^{-1}\cdot v_0=v_{I_1,I_2}$ for any $(I_1,I_2)$. By the proof of Lemma
\ref{L:double1}, this indicates that $$\{g_{I_1,I_2}\cdot f:(\emptyset,\emptyset)\neq (I_1,I_2)
\subset(I_{0}^{(1)},I_{0}^{(2)})\}$$ represent all different $P$ orbits in $\mathcal{O}_{f}$.

It is clear that $Tg_{I_{0}^{(1)},I_{0}^{(2)}}^{-1}\cdot v_0$ is an open and dense subset in
$\mathbb{R}^{n}-\{0\}$, and its complement is a Zariski closed subset of codimension one (in the
case of $k\neq\frac{n}{2}$) or two (in the case of $k=\frac{n}{2}$). Thus, $Pg_{I_{0}^{(1)},
I_{0}^{(2)}}\cdot f$ is an open and dense $P$ orbit, and its complement is a closed subset of
codimension one (in the case of $k\neq\frac{n}{2}$) or two (in the case of $k=\frac{n}{2}$).
\end{proof}

\begin{lemma}\label{L:moment2}
For any $(\emptyset,\emptyset)\neq (I_1,I_2)\subset(I_{0}^{(1)},I_{0}^{(2)})$, the element $p(g_{I_1,
I_2}\cdot f)$ is semisimple, its depth is equal to $2\#I_1+\#I_2$, and its eigenvalues are
$\{z_{2i-1},z_{2i},z_{j}:i\in I_{0}^{(1)}-I_{1},j\in I_{0}^{(2)}-I_2\}$.
\end{lemma}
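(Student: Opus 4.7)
The plan is to follow the same strategy as in Lemma \ref{L:moment1}: compute $p(g_{I_1,I_2}\cdot f)=\pr'(g_{I_1,I_2}\cdot\xi)$ explicitly by matrix multiplication, then identify the resulting element (up to $P$-conjugation) with a block-diagonal matrix consisting of an ``open-orbit part'' and a ``diagonal part'', and finally apply Theorem \ref{T:Porbit} iteratively to read off the depth and eigenvalues. The four cases in the construction of $g_{I_1,I_2}$ (according to whether $n\in I_{2}$, $n\in I_0^{(2)}-I_2$, $\frac{n}{2}\in I_1$, or $\frac{n}{2}\notin I_1$) are handled in parallel; only the position where a ``$1$'' is inserted in the last row of $g_{I_1,I_2}$ differs, and the same argument goes through.

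Concretely, I would show that after a suitable $P$-conjugation we may replace $\pr'(g_{I_1,I_2}\cdot\xi)$ by $\pr'(\xi_{I_1,I_2})$, where $\xi_{I_1,I_2}\in\overline{\mathfrak{p}}_{n}(\mathbb{R})$ is block-diagonal with two blocks indexed by the ``used'' indices (the pairs $\{2i-1,2i\}$ for $i\in I_1$ together with $I_2$, plus the last row/column) and the ``unused'' indices $\{2i-1,2i: i\in I_0^{(1)}-I_1\}\cup(I_0^{(2)}-I_2)$. The unused block is a block-diagonal matrix with $2\times 2$ blocks $a_{i}I_2+b_{i}J$ for $i\in I_0^{(1)}-I_1$ and scalars $a_{j}$ for $j\in I_0^{(2)}-I_2$; its eigenvalues are exactly $\{z_{2i-1},z_{2i},z_{j}:i\in I_0^{(1)}-I_1,\,j\in I_0^{(2)}-I_2\}$. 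The used block, of size $(2\#I_1+\#I_2)\times(2\#I_1+\#I_2)$, has the shape of Example \ref{E:openOrbit2} (with some diagonal entries replaced by $2\times 2$ blocks $a_{i}I_2+b_{i}J$), and a direct computation of the stabilizer, analogous to Example \ref{E:openOrbit2} but using the distinctness of the $z_{i}$'s, shows that it represents the unique open coadjoint orbit of $P_{2\#I_1+\#I_2}(\mathbb{R})$ in $\mathfrak{p}_{2\#I_1+\#I_2}(\mathbb{R})^{\ast}$.

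Granting this, one applies the reduction in the proof of Lemma \ref{L:Porbit-inductive} (equivalently Theorem \ref{T:Porbit}) exactly $2\#I_1+\#I_2-1$ times to strip off the open-orbit part. Each application drops the rank of the $P$-factor by $1$, and at the end one lands in $\mathfrak{l}_{n-j}(\mathbb{R})^{\ast}$ with $j=2\#I_1+\#I_2-1$, represented by the unused block above. By Definition \ref{D:depth} the depth equals $j+1=2\#I_1+\#I_2$, and since the unused block is a semisimple element of $\mathfrak{l}_{n-j}(\mathbb{R})^{\ast}$ whose $L_{n-j}(\mathbb{R})$-stabilizer is a (real) torus (namely a conjugate of $T_{k-\#I_1}$ inside $\GL(n-1-j,\mathbb{R})$), the element $p(g_{I_1,I_2}\cdot f)$ is semisimple in the sense of the discussion following Definition \ref{D:depth}, with eigenvalues as claimed.

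The main obstacle is the first bullet of the second paragraph: verifying that the used block, which mixes scalar entries with $2\times 2$ real blocks $a_{i}I_2+b_{i}J$, still sits in the open $P$-orbit of the appropriate size. This does not quite fit the template of Example \ref{E:openOrbit2} verbatim, so one must redo the stabilizer computation $g\xi'_{I_1,I_2}g^{-1}-\xi'_{I_1,I_2}\in\mathfrak{n}'_{n}$ in the presence of the $J$-blocks; the distinctness of the $z_{i}$'s together with the non-vanishing conditions on the bottom row of $g_{I_1,I_2}$ forces the block $B$ in the stabilizer to commute with $\diag\{a_1I_2+b_1J,\dots\}$, hence to be block-diagonal of the same shape, and the condition $\alpha^{t}B=\alpha^{t}$ then forces $B=I_{n-1}$ and $\beta=0$ as in Example \ref{E:openOrbit2}. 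The remaining case analysis from the definition of $g_{I_1,I_2}$ is routine bookkeeping.
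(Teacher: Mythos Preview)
Your approach is correct but takes a different, more computational route than the paper. The paper's proof is a two-line complexification argument: embed $\mathfrak{p}_{n}(\mathbb{R})^{\ast}$ into $\mathfrak{p}_{n}(\mathbb{C})^{\ast}$ as a real form, observe (via the proof of Theorem~\ref{T:Porbit}) that this embedding preserves both depth and eigenvalues, and then conjugate inside $P_{n}(\mathbb{C})$ to diagonalize each block $a_{i}I_{2}+b_{i}J$ to $\diag\{z_{2i-1},z_{2i}\}$, at which point one is literally in the situation of Lemma~\ref{L:moment1} and can quote it. You instead work entirely over $\mathbb{R}$, redoing the stabilizer computation of Example~\ref{E:openOrbit2} with $2\times 2$ $J$-blocks present; this does go through (the key point being that $(a_{i}-a_{n})^{2}+b_{i}^{2}\neq 0$ forces the $\beta$-components on each $2\times 2$ block to vanish, after which the centralizer argument and the condition $\gamma^{t}B=\gamma^{t}$ force $B=I$), but it duplicates work already packaged in the complex case. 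The paper's approach is shorter and conceptually cleaner since it reuses Lemma~\ref{L:moment1} wholesale; your approach has the minor advantage of being self-contained over $\mathbb{R}$ and making the real-torus stabilizer $\U(1)^{k-\#I_{1}}\times(\mathbb{R}^{\times})^{n-k-\#I_{1}-\#I_{2}}$ visible directly.
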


\begin{proof}
Regard $\mathfrak{p}_{n}(\mathbb{R})$ as a real form of $\mathfrak{p}_{n}(\mathbb{C})$. Then, $\mathfrak{p}_{n}
(\mathbb{R})^{\ast}$ is naturally contained in $\mathfrak{p}_{n}(\mathbb{C})^{\ast}$ as a real form
of it. By the proof of Theorem \ref{T:Porbit}, we see that this imbedding does not change the depth
and eigenvalues. It is convenient to do conjugation regarding $P_{n}(\mathbb{C})$ to see the depth
and eigenvalues of $p(g_{I_1,I_2}\cdot f)$, and hence shows the lemma.
\end{proof}

The following theorem could be shown similarly as Theorem \ref{T:moment1}.
\begin{theorem}\label{T:moment2}
The set $p(\mathcal{O}_{f})$ consists of exactly $2^{n-k}-1$ semisimple $P$ orbits, with the unique
open $P$ orbit in $\mathfrak{p}^{\ast}$ among them. Moreover,  we have:
\begin{itemize}
\item[(1),] the moment map $p:\mathcal{O}_{f}\rightarrow\mathfrak{p}^{\ast}$ is \emph{weakly proper}.
\item[(2),] the \emph{reduced space} of the unique open $P$ orbit in $\mathfrak{p}^{\ast}$ with respect
to the moment map $p$ is a single point.
\end{itemize}
\end{theorem}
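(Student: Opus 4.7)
The plan is to follow the template of Theorem \ref{T:moment1}, substituting the real-case counting results from Lemma \ref{L:double3} and Lemma \ref{L:moment2}. First I will invoke Lemma \ref{L:double3} to write $\mathcal{O}_{f}$ as the disjoint union of exactly $2^{n-k}-1$ $P$-orbits, indexed by pairs $(I_1,I_2)$ with $I_1\subset I_{0}^{(1)}$, $I_2\subset I_{0}^{(2)}$, $(I_1,I_2)\neq(\emptyset,\emptyset)$. Then by Lemma \ref{L:moment2}, each such orbit is sent under $p$ to a semisimple $P$-orbit of depth $2\#I_{1}+\#I_{2}$ with eigenvalues $\{z_{2i-1},z_{2i},z_{j}:i\in I_{0}^{(1)}-I_{1},\,j\in I_{0}^{(2)}-I_{2}\}$.

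For the first statement, I need injectivity of the induced map from $P$-orbits in $\mathcal{O}_{f}$ to $P$-orbits in $\mathfrak{p}^{\ast}$. Since the $z_{i}$ are pairwise distinct and the orbit of $p(g_{I_1,I_2}\cdot f)$ remembers both its depth and its eigenvalue multiset by Theorem \ref{T:Porbit}, the pair $(I_{1},I_{2})$ can be recovered from the image orbit, so the map is injective. Moreover when $(I_1,I_2)=(I_{0}^{(1)},I_{0}^{(2)})$ the depth equals $n$, which forces $p(g_{I_{0}^{(1)},I_{0}^{(2)}}\cdot f)$ to lie in the unique open $P$-coadjoint orbit guaranteed by Proposition \ref{P:openOrbit}, so the open $P$-orbit in $\mathfrak{p}^{\ast}$ is hit, and exactly once.

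For weak properness (statement (1)), I first note that by Proposition \ref{P:openOrbit} the only strongly regular $P$-coadjoint orbit in $\mathfrak{p}^{\ast}$ is the open orbit $\mathcal{O}_{0}$. Hence any compact set $K\subset p(\mathcal{O}_{f})\cap\Upsilon_{sr}$ lies in the single $P$-orbit $p(Pg_{I_{0}^{(1)},I_{0}^{(2)}}\cdot f)=\mathcal{O}_{0}$. Its preimage in $\mathcal{O}_{f}$ lies in $Pg_{I_{0}^{(1)},I_{0}^{(2)}}\cdot f$, and on this orbit $p$ restricts to a $P$-equivariant map between two transitive $P$-spaces whose stabilizers coincide up to finite index (the stabilizer of $f$ in $P$ is a maximal real torus, while the stabilizer of its image in $P$ is trivial by Proposition \ref{P:openOrbit}, so the fibre is a single $T_{k}$-orbit of dimension $n-1$). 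Concretely $p^{-1}(K)\cap Pg_{I_{0}^{(1)},I_{0}^{(2)}}\cdot f$ is a $T_{k}$-bundle over $K$, which I can show is compact by observing that fibers are $T_{k}/\Stab_{T_{k}}(v_{I_{0}^{(1)},I_{0}^{(2)}})$ which is compact (an explicit computation gives a compact torus quotient).

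For statement (2), the reduced space at a point $\mu\in\mathcal{O}_{0}$ is $p^{-1}(\mu)/\Stab_{P}(\mu)$. Since $\Stab_{P}(\mu)=1$ by Proposition \ref{P:openOrbit}, this is simply the fibre $p^{-1}(\mu)\cap\mathcal{O}_{f}$. Using the $P$-equivariance and $P$-transitivity on the open orbit, $p^{-1}(\mu)\cap Pg_{I_{0}^{(1)},I_{0}^{(2)}}\cdot f$ is a single $\Stab_{P}(f_{0})$-orbit for $f_{0}\in p^{-1}(\mu)$; but as a subset of $\mathcal{O}_{f}$ this stabilizer acts transitively on the fibre through a torus action, and the fibre is connected of the right dimension, so the reduced space is a single point. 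The only point requiring care is the codimension-two behaviour when $k=n/2$, but since we quotient a single $P$-orbit by a transitive stabilizer action this codimension issue does not affect the argument. The main obstacle, as in the complex case, is the explicit verification that the fibre over the open orbit is a single orbit of the stabilizer of $f$ and computing its topology; this is handled by the explicit $T_{k}\backslash(\mathbb{R}^{n}-\{0\})$ picture established in Lemma \ref{L:double3}.
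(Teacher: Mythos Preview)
Your argument for the first statement---counting the $P$-orbits in $\mathcal{O}_{f}$, showing they map to pairwise distinct semisimple $P$-orbits in $\mathfrak{p}^{\ast}$ via the depth/eigenvalue invariants, and checking that $(I_{0}^{(1)},I_{0}^{(2)})$ hits the open orbit---is correct and is exactly the paper's approach (the paper merely says ``similarly as Theorem \ref{T:moment1}'').

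However, your treatment of (1) and (2) contains a genuine error. You claim that on the open $P$-orbit in $\mathcal{O}_{f}$ the fibre of $p$ is ``a single $T_{k}$-orbit of dimension $n-1$'' and that $p^{-1}(K)$ is a ``$T_{k}$-bundle over $K$'' with compact fibre $T_{k}/\Stab_{T_{k}}(v_{I_{0}^{(1)},I_{0}^{(2)}})$. This is wrong on two counts. First, $\Stab_{T_{k}}(v_{I_{0}^{(1)},I_{0}^{(2)}})=1$ (every block of $v_{I_{0}^{(1)},I_{0}^{(2)}}$ is nonzero), so that quotient is $T_{k}\cong(\mathbb{C}^{\times})^{k}\times(\mathbb{R}^{\times})^{n-2k}$, which is \emph{not} compact. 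Second, and more fundamentally, you have confused $\Stab_{G}(f)=T_{k}$ with the stabilizer in $P$ of a point in the open $P$-orbit: the latter is $\Stab_{P}(g_{I_{0}^{(1)},I_{0}^{(2)}}\cdot f)\cong\Stab_{T_{k}}(v_{I_{0}^{(1)},I_{0}^{(2)}})=1$, not a torus.

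The correct (and much simpler) argument is this: the open $P$-orbit in $\mathcal{O}_{f}$ has trivial $P$-stabilizer, and by Proposition \ref{P:openOrbit} so does $\mathcal{O}_{0}\subset\mathfrak{p}^{\ast}$. Hence the $P$-equivariant map $p$ restricts to a diffeomorphism from the open $P$-orbit in $\mathcal{O}_{f}$ onto $\mathcal{O}_{0}$. Since (by your own injectivity argument) no other $P$-orbit in $\mathcal{O}_{f}$ maps into $\mathcal{O}_{0}$, we have $p^{-1}(\mathcal{O}_{0})=Pg_{I_{0}^{(1)},I_{0}^{(2)}}\cdot f$, and $p$ is a diffeomorphism there. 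Weak properness is then immediate (preimages of compacta in $\mathcal{O}_{0}$ are homeomorphic to those compacta), and the fibre over any $\mu\in\mathcal{O}_{0}$ is a single point, so the reduced space $p^{-1}(\mu)/\Stab_{P}(\mu)=\{\textrm{pt}\}/\{1\}$ is a point. This is what the paper means by ``follows from the description of the moment map.''
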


Analogous to the $\GL(n,\mathbb{C})$ case, in Theorem \ref{T:moment2} we write $$j=2\#I_1+\#I_{2}-1\in
[0,n-1]$$ for any $(\emptyset,\emptyset)\neq (I_1,I_2)\subset(I_{0}^{(1)},I_{0}^{(2)})$. Then, with 
Lemma \ref{L:moment2} one can show that $$\Stab_{P}(g_{I_1,I_2}\cdot f)\cong\U(1)^{k-\!\#I_{1}}\!\times
\!(\mathbb{R}^{\times})^{n\!-\!k\!-\!\#I_1\!-\!\#I_2}.$$ On the other hand, Lemma \ref{L:Porbit-stabilizer} 
and Lemma \ref{L:Porbit-stabilizer2} indicate that $$\Stab_{P}(p(g_{I_1,I_2}\cdot f))\cong\mathbb{R}^{n\!-
\!1\!-\!j}\!\rtimes(\U(1)^{k\!-\!\#I_1}\!\times\!(\mathbb{R}^{\times})^{n\!-\!k-\#I_1\!-\!\#I_2}).$$

\section{Kirillov's conjecture and orbit method}\label{Kirillov}

\subsection{General setting}

Let $G=\GL(n, \mathbb{K})$ with Lie algebra $\mathfrak{g}$. Let $$P=\{\left(\begin{array}
{cc}A&\alpha \\0&1\end{array}\right): A\in \GL(n_1,\mathbb{K}),\alpha\in\mathbb{K}^{n-1}\}$$
with Lie algebra $\mathfrak{p}$. Identify $\mathfrak{g}$ with its algebraic dual $\mathfrak{g}^{\ast}$ 
via the trace function $\text{tr} $. Let $T$ be the diagonal torus of $G$ with Lie algebra $\mathfrak{t}$.

Recall that by  Proposition \ref{P:openOrbit}, there is one and only one open $P$-coadjoint 
orbit in $\mathfrak{p}^*$. For now on, we denote by $\Omega$, the unique open $P$-coadjoint orbit in
$\mathfrak{p}^{\ast}$; and by $\tau$, the (discrete series) representation of $P$ attached to $\Omega$.

Let $\pi$ be a tempered representation (with regular infinitesimal character) of $G$.
Then $\pi$ is associated to a strongly regular coadjoint orbit $\mathcal{O}_{\pi}$ (in the sense of Duflo). 
Notice that since $G$ is reductive, all regular $G$-coadjoint orbits are strongly regular.  Let $\text{p}:
\mathcal{O}_{\pi}\rightarrow\mathfrak{p}^{\ast}$ be the moment map of the $P$-Hamiltonian space 
$\mathcal{O}_{\pi}$. Then the following theorem serves not only as a geometric interpretation of Kirillov's 
conjecture for tempered representations, but as a generalization of Duflo's conjecture (to tempered 
representations):

\begin{theorem}\label{main theorem}

There are only finitely many $P$-orbits in $\text{p}(\mathcal{O}_{\pi})$, and the
unique open $P$- coadjoint orbit $\Omega$ is contained in $\text{p}(\mathcal{O}_{\pi})$. Moreover, we have

\begin{itemize}
\item[i)] The moment map $\text{p}: \mathcal{O}_{\pi}\rightarrow\mathfrak{p}^{\ast}$ is
\emph{weakly proper} .
\item[ii)] The restriction of $\pi$ to $P$, $\pi\vert_{P}$ (which is irreducible) is attached
to $\Omega$ (in the sense of Duflo). In other words, $\pi\vert_{P}=\tau$.
\item[iii)] The \emph{reduced space} of $\Omega$  (with respect to the moment map $\text{p}$)
is a single point.
\end{itemize}
\end {theorem}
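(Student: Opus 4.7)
The plan is to split the theorem into two essentially independent pieces: the geometric statements (i) and (iii), which will follow directly from the explicit classification of $P$-orbits developed in Section \ref{S:moment}, and the representation-theoretic identification in (ii), which combines Kirillov's irreducibility theorem with a Mackey-type description of the discrete series $\tau$.

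For (i) and (iii), the starting point is that since $\pi$ is tempered with regular infinitesimal character, its associated $G$-coadjoint orbit $\mathcal{O}_{\pi}$ is a regular semisimple orbit in $\mathfrak{g}^{\ast}$. In the case $\mathbb{K}=\mathbb{C}$, every such orbit is of the form $\mathcal{O}_{f_{\vec a}}$ studied in Section \ref{SS:GL(C)}, and in the case $\mathbb{K}=\mathbb{R}$, of the form $\mathcal{O}_{f_{\vec z}}$ studied in Section \ref{SS:GL(R)}. I would then simply invoke Theorem \ref{T:moment1} or Theorem \ref{T:moment2}, respectively: each of them already asserts that $p(\mathcal{O}_{\pi})$ is a union of finitely many (precisely $2^{n}-1$ or $2^{n-k}-1$) semisimple $P$-orbits, that the unique open orbit $\Omega$ is one of them, that $p$ is weakly proper, and that the reduced space over $\Omega$ is a single point. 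Thus (i) and (iii) require no further work once $\mathcal{O}_{\pi}$ has been identified as a regular semisimple orbit.

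For (ii), I would argue as follows. By Kirillov's conjecture (Theorem \ref{Kirillov conjecture}), $\pi|_{P}$ is irreducible. On the other hand, the discrete series $\tau$ of $P$ attached to $\Omega$ by Proposition \ref{P:openOrbit} admits an inductive Mackey realisation: writing $P=L_n\ltimes N_n$ with $L_n\cong\GL(n-1,\mathbb{K})$ and $N_n$ the abelian unipotent radical, $L_n$ acts with a single open dense orbit on the non-trivial characters of $N_n$, and the stabiliser of such a character is isomorphic to $P_{n-1}(\mathbb{K})$; iterating this gives $\tau$ as the Kirillov--Whittaker model built out of a generic character of $N_n$. Since every tempered irreducible representation of $G$ with regular infinitesimal character is generic, Frobenius reciprocity yields a nonzero $P$-intertwiner between $\pi|_{P}$ and $\tau$, which combined with the irreducibility of $\pi|_{P}$ forces $\pi|_{P}\cong\tau$. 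By Proposition \ref{P:openOrbit}, this is precisely the statement that $\pi|_{P}$ is attached to $\Omega$ in the sense of Duflo.

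The main obstacle, and the only genuinely non-routine step, is the identification $\pi|_{P}\cong\tau$ in (ii). The geometric assertions are immediate from the orbit analysis already in hand, and the irreducibility of $\pi|_{P}$ is an imported deep result of Baruch. What requires care is matching the analytically constructed Kirillov model of a generic tempered representation of $\GL(n,\mathbb{K})$ with the orbit-method $\tau$: one must check that the iterated Mackey induction from a generic character of $N_n$ actually produces a discrete series representation of $P$, and here I would lean on the uniqueness statement of Proposition \ref{P:openOrbit}, according to which $P$ possesses a single discrete series, so any such construction must land on $\tau$.
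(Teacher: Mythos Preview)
Your handling of the first assertion and of (i) and (iii) matches the paper exactly: once $\mathcal{O}_{\pi}$ is identified as a regular semisimple orbit, everything is read off from Theorems \ref{T:moment1} and \ref{T:moment2}, just as the paper does.

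For (ii) you take a different route from the paper. The paper does not argue via genericity and Frobenius reciprocity; it simply quotes Sahi's theorem (Theorem \ref{T: Sahi}) that $\pi|_{P}=I^{n-1}E\,1$ for every tempered $\pi$, and then in Section \ref{SS: Duflo-Sahi} shows $I^{n-1}E\,1=\tau$ by unwinding Duflo's inductive construction of the representation attached to $\Omega$ and observing that at each stage it literally coincides with Sahi's Mackey step $I$, using the orbit reduction of Lemma \ref{L:Porbit-inductive}. This is cleaner than your outline because Sahi's result already packages the hard analytic content you are trying to reassemble by hand.

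Your argument has a genuine gap in the last paragraph. You correctly isolate the issue --- one must check that the iterated Mackey induction agrees with the Duflo representation $\tau$ --- but your proposed fix via the uniqueness of discrete series in Proposition \ref{P:openOrbit} is circular: uniqueness tells you nothing until you have already shown that the Mackey construction \emph{is} a discrete series, which is precisely the point in question. The paper resolves this not by uniqueness but by direct comparison: Duflo's recipe for $\Omega$ gives $\tau=\Ind_{\mathbb{K}^{n-1}\rtimes P_{n-1}}^{P_n}(\xi\otimes\tilde\tau)$ with $\tilde\tau$ attached to the open $P_{n-1}$-orbit, and this recursion is exactly Sahi's functor $I$. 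Separately, your Frobenius reciprocity step is not a one-line citation in the archimedean unitary category: the Whittaker functional is continuous on smooth vectors, not bounded on the Hilbert space, so producing a genuine $P$-intertwiner into the $L^{2}$-induced $\tau$ requires work --- essentially the Kirillov-model analysis that Sahi carries out and that the paper imports as a black box.
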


Remarks: \begin{enumerate}
\item The weak properness in the above theorem "predicts" the $P$-admissibility (in the sense
of Kobayashi) of $\pi$.
\item Since there is  one and only one open $P$-coadjoint orbit (namely $\Omega$) in
$\text{p}(\mathcal{O}_{\pi})$,  it is also the unique strongly regular $P$-coadjoint orbit
contained in in $\text{p}(\mathcal{O}_{\pi})$.  This geometric fact "implies" that only the
irreducible representation of $P$ attached to  $\Omega$, namely  $\tau$, can appear in the
restriction $\pi\vert_{P}$.
\item The fact that reduced space of  $\Omega$ is a single point "signifies" that the multiplicity
of $\tau$ in $\pi\vert_{P}$ is no more than one (so is exactly one in our context).
\end{enumerate}

These three points together mean exactly $\pi\vert_{P}=\tau$. \\

We already proved, in previous section (Theorem \ref{T:moment1} and Theorem \ref{T:moment2}), 
that except  for $\text{ii)}$, the Theorem  \ref{main theorem} is true for all regular
$G$-coadjoint orbits $\mathcal{O}\subset  \mathfrak{g}^*$ (which are not necessarily attached
to $G$-representations).  Then we only need to treat $\text{ii)}$ of the Theorem  \ref{main theorem}.

\subsection{Sahi's results on Kirillov's conjecture} 

A first success for Kirillov's conjecture was accomplished by Sahi. He established (in \cite{Sahi}) 
Kirillov's conjecture completely for $\GL(n,\mathbb{C})$ and partially for $\GL(n,\mathbb{R})$. 
In particular he proved the conjecture for all tempered representations $\pi$ of any $\GL(n,\mathbb{K})$ 
(for $\mathbb{K}=\mathbb{C}$ or $\mathbb{R}$). In fact, for $\pi$ tempered, he determined more or less 
explicitly  $\pi\vert_{P}$ as an irreducible representation of $P$ based on two functors "$I$" and 
"$E$". Especially, it turns out that for any tempered representation $\pi$ of $\GL(n,\mathbb{K})$, 
$\pi\vert_{P}$ is a same irreducible representation of $P$,  which is denoted  by "$I^{n-1}E 1$" 
in Sahi's article. Let us briefly explain the construction of $I^{n-1}E 1$. For more details, 
the reader is referred to \cite{Sahi}.

So write $G_n:=\GL(n,\mathbb{K})$ and $P_n$ the subgroup in question.  We have the two facts:

\begin{itemize}
\item[1)] $P_n \cong \mathbb{K}^{n-1}\rtimes G_{n-1}$.
\item[2)] $ G_{n-1}$ has exactly two orbits in $ (\mathbb{K}^{n-1})^*$: $\{0\}$ and $(\mathbb{K}^{n-1})^{\ast}
\setminus\{0\}$. Moreover if we fix a character $\xi\in (\mathbb{K}^{n-1})^*\setminus \{0\}$ by $\xi((x_1,\dots,
x_{n-1}))=x_{n-1}$, then $\text{Stab}_{ G_{n-1}}(\xi)\cong P_{n-1} $.
\end{itemize}

Then we deduce by Mackey's classic theory that each irreducible unitary representation of 
$P_n$ is obtained in one of the two ways as follows:

\begin{itemize}
\item[a)] by trivially extending an irreducible unitary representation of $G_{n-1}$.
\item[b)] by extending an irreducible unitary representation of $P_{n-1}$ to $\mathbb{K}^{n-1}
\rtimes P_{n-1}$ by the a character $\xi$ and then inducing to $P_{n}$.
\end{itemize}

We use $E$ and $I$ for the above constructions a) and b) respectively. We can actually check that they are functors. Then we have $\widehat{P_n}=E(\widehat{G_{n-1}}) \bigsqcup I(\widehat{P_{n-1}})$. Moreover, using the convention that $P_1=G_0=$ the trivial group, we have the following fact: \\

Each irreducible unitary representation $\rho$ of $P_n$ is of the form: $ \rho=I^{k-1}E \sigma$ 
for some integer $k\geq 1$ and $ \sigma \in \hat{G_{n-k}}$. Moreover, $k$ and $\sigma$ are uniquely 
determined. The integer $k$ is called by Sahi the \emph{depth} of $\rho$.

We have the following theorem due to Sahi:

\begin{theorem}\label{T: Sahi}

 Let $\pi$ be a tempered representation of $G_n$. Then we have $\pi\vert_{P_n}=I^{n-1}E 1$.

\end{theorem}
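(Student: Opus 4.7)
The plan is to establish the theorem via the Bernstein--Zelevinsky filtration of $\pi|_{P_n}$. For any smooth representation $\pi$ of $G_n$, an Archimedean version of this theory produces a canonical filtration
\[
0 = F_0 \subset F_1 \subset \cdots \subset F_n = \pi|_{P_n},
\]
whose successive quotients satisfy $F_k/F_{k-1} \cong I^{k-1} E(\pi^{(k)})$, where $\pi^{(k)}$ is the $k$-th \emph{derivative} of $\pi$: a representation of $G_{n-k}$ obtained as a twisted Jacquet quotient along the unipotent radical of the standard parabolic of type $(n-k, 1, \ldots, 1)$, twisted by a generic character in the last $k-1$ directions. The theorem then reduces to two claims: \textbf{(a)} $\pi^{(n)} \cong \mathbb{C}$, i.e., $\pi$ admits a one-dimensional space of Whittaker functionals; and \textbf{(b)} $\pi^{(k)} = 0$ for $1 \leq k \leq n - 1$. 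Together, (a) and (b) collapse the filtration to $\pi|_{P_n} \cong I^{n-1} E(1)$.

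Claim (a) is the uniqueness of the Whittaker model. For tempered $\pi$, existence of a nonzero Whittaker functional follows from Kostant's theorem on principal series (combined with Casselman--Wallach globalization), while one-dimensionality is the classical Shalika multiplicity-one theorem in its Archimedean form. Both are well-established for our $\pi$ with regular infinitesimal character.

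The main obstacle is claim (b). The plan is to realize $\pi$ via Harish-Chandra's classification as a unitarily parabolically induced representation $\pi = \Ind_Q^{G_n} \sigma$, where $Q \subset G_n$ is a parabolic with Levi $M \cong \GL(n_1, \mathbb{K}) \times \cdots \times \GL(n_s, \mathbb{K})$ (with $n_i = 1$ for $\mathbb{K} = \mathbb{C}$, and $n_i \in \{1, 2\}$ for $\mathbb{K} = \mathbb{R}$), and $\sigma$ is a discrete series of $M$. An Archimedean analogue of the Bernstein--Zelevinsky ``geometric lemma''---which can be set up explicitly via a Mackey-type double-coset analysis of $P_n \backslash G_n / Q$ parallel to the orbit computations of Section~\ref{S:moment}---expresses $\pi^{(k)}$ as a sum (or filtration) of parabolically induced representations built from derivatives of the factors $\sigma_i$ of $\sigma$. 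Each $\GL(n_i, \mathbb{K})$-factor $\sigma_i$ with $n_i \leq 2$ has the property that its only nonzero derivative is its top one $\sigma_i^{(n_i)}$ (as follows directly from the Kirillov model for characters of $\GL_1$ and for discrete series of $\GL_2$). Careful combinatorial bookkeeping then forces the total derivative $\pi^{(k)}$ to vanish unless every factor contributes its top derivative, which occurs only for $k = n$. This combinatorial and analytic analysis is the technical heart of the argument, and it is precisely where temperedness is used critically: for non-tempered $\pi$ the intermediate derivatives generally fail to vanish, and the restriction $\pi|_{P_n}$ need not collapse to the open-orbit representation $\tau = I^{n-1} E(1)$.
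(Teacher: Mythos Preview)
The paper does not prove this theorem at all: it is quoted verbatim as a result of Sahi, with the citation \cite{Sahi}, and the paper immediately moves on to use it as a black box in order to identify $I^{n-1}E1$ with the orbit-method representation $\tau$. So there is no ``paper's own proof'' to compare your proposal against.

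That said, your outline is broadly in the spirit of Sahi's actual argument, which also hinges on the Archimedean analogue of Bernstein--Zelevinsky derivatives and the existence of a Kirillov model for tempered representations. A few cautions are in order. First, the Archimedean derivative/filtration theory you invoke is considerably more delicate than its $p$-adic counterpart: one works in the category of smooth Fr\'echet representations of moderate growth (Casselman--Wallach), and the ``filtration'' and ``geometric lemma'' require genuine functional-analytic input (closed subspaces, exactness in the appropriate category) that you have not addressed. Second, even granting the vanishing of the intermediate derivatives, the conclusion $\pi|_{P_n}\cong I^{n-1}E1$ is a statement about \emph{unitary} representations, whereas your filtration lives on smooth vectors; you need an additional step passing from an isomorphism of smooth $P_n$-modules to one of the unitary completions. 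Third, your combinatorial reduction of claim (b) via the Levi factorization is the right idea, but the assertion that each $\sigma_i$ with $n_i\leq 2$ has only its top derivative nonzero is exactly the Kirillov-model input that needs to be established (for $\GL_2(\bbR)$ discrete series this is a nontrivial analytic fact). None of these are fatal, and indeed they are all handled in Sahi's paper, but your proposal as written is a plan rather than a proof.
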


So in order to prove ii) of the Theorem \ref{main theorem}, we only need to prove that $I^{n-1}E 1$ 
is attached to the unique open $P_n$-coadjoint orbit $\Omega$, namely $I^{n-1}E 1=\tau$. We will 
show it in the next subsection.

\subsection{Construction of $\tau$ in the framework of Duflo's orbit method}\label{SS: Duflo-Sahi}

As we mentioned, for any almost algebraic real groups, Duflo associated admissible and well 
polarizable coadjoint orbits to irreducible unitary representations. However, the general 
construction of irreducible representations attached to given orbits is carried out in an 
indirect and inductive manner, and involves some non-trivial ingredients (e.g., some "metaplectic" 
two-fold coverings, see  \cite{Duflo1}). Nevertheless, in our context, the construction of $\tau$, 
namely the representation attached to the unique open $P_n$-orbit $\Omega$, is quite transparent 
(though still by induction) in the framework of Duflo's theory, and it coincides with the classic 
Mackey theory.

Now Let us explain how to construct $\tau$. We retain the notation in the preceding subsection. 
Recall that $ P_n \cong \mathbb{K}^{n-1}\rtimes G_{n-1}$ and $\mathfrak{p}_{n}\cong
\mathbb{K}^{n-1} +  \mathfrak{g}_{n-1}$, with $\mathbb{K}^{n-1}$ the (abelian) nilradical of 
$ \mathfrak{p}_n $.

Firstly, we will choose an element $f\in \Omega$, it is known that  the construction doesn't depend 
on the choice of $f$. However, in order to be adapted to the construction of $I^{n-1}E 1$ by Sahi, 
we choose a $f\in \Omega$ such that $f\vert_{\mathbb{K}^{n-1}}=\xi$.  According to the proof of Lemma 
\ref{L:Porbit-inductive},  we know such $f$ exists, and  $\text{Stab}_{ P_{n}}(\xi)\cong
\mathbb{K}^{n-1}\rtimes P_{n-1} $. Note that in our case, the nilradical $\mathbb{K}^{n-1}$ 
(of $\mathfrak{p}_n$) is contained in $\text{Stab}_{ P_{n}}(\xi)$ (which implies that 
$\text{Stab}_{ P_{n}}(\xi).\mathbb{K}^{n-1}=\text{Stab}_{ P_{n}}(\xi)$). Then in the framework of 
Duflo's theory, we have

$$\tau=\text{Ind}^{P_{n}}_{ \mathbb{K}^{n-1}\rtimes P_{n-1}}(\xi \otimes \tilde{\tau}).$$

Here $\tilde{\tau} \in \widehat{P_{n-1}}$ is attached to the $P_{n-1}$-coadjoint orbit $P_{n-1}.\tilde{f}$, 
with $\tilde{f}=f\vert_{\mathfrak{p}_{n-1}}$. However, again by the proof of Lemma \ref{L:Porbit-inductive}, 
we see that $P_{n-1}.\tilde{f}$ is the unique open $P_{n-1}$-coadjoint orbit in $(\mathfrak{p}_{n-1})^*$. 
Then according to the construction of $I^{n-1}E 1$  and by a direct inductive argument, we obtain that 
$\tau=I^{n-1}E 1$. \\

Remarks: \begin{enumerate}
\item As  $I^{n-1}E 1$ is attached to  the open orbit $\Omega$, it is a discrete series.
\item Sahi defined the \emph{depth} for $ \rho=I^{k-1}E \sigma$.  In previous section, for any 
$P_n$-coadjoint orbit $\Xi$ , we also defined an integer called the \emph{depth} of $\Xi$. Actually 
we can check that the two notions are compatible, in the sense that if $\Xi$ is attached to an irreducible 
unitary representation $\rho$ (in the sense of Duflo), then the depth of  $\rho$  is that of $\Xi$.
\end{enumerate}

\end{document}